\titleformat{\section}{\centering\Large\bfseries}{\arabic{section}.}{1em}{}
\titleformat{\subsection}{\centering\large\bfseries}{\arabic{section}.\arabic{subsection}.}{1em}{}
\titleformat{\subsubsection}{\centering\large\bfseries}{\arabic{subsection}.\arabic{subsubsection}.}{1em}{}
\theoremstyle{plain}
\newtheorem{theorem}{Theorem}[section]
\newtheorem{thm}[theorem]{Theorem}
\newtheorem{prop}[theorem]{Proposition}
\newtheorem{corollary}[theorem]{Corollary}
\newtheorem{cor}[theorem]{Corollary}
\newtheorem{lem}[theorem]{Lemma}
\newtheorem{conjecture}[theorem]{Conjecture}
\theoremstyle{definition}
\newtheorem{defi}[theorem]{Definition}
\newtheorem{remark}[theorem]{Remark}
\newtheorem{rmk}[theorem]{Remark}
\newtheorem{question}[theorem]{Question}
\newtheorem{example}[theorem]{Example}
\numberwithin{equation}{theorem}
\newcommand{\frakR}{{\mathfrak R}}
\newcommand{\frakU}{{\mathfrak U}}
\newcommand{\frakX}{{\mathfrak X}}
\newcommand{\frakY}{{\mathfrak Y}}
\newcommand{\bA}{{\mathbb A}}
\newcommand{\bB}{{\mathbb B}}
\newcommand{\bC}{{\mathbb C}}
\newcommand{\bD}{{\mathbb D}}
\newcommand{\bL}{{\mathbb L}}
\newcommand{\bM}{{\mathbb M}}
\newcommand{\bN}{{\mathbb N}}
\newcommand{\bQ}{{\mathbb Q}}
\newcommand{\bV}{{\mathbb V}}
\newcommand{\bW}{{\mathbb W}}
\newcommand{\bZ}{{\mathbb Z}}
\newcommand{\mE}{{\mathcal E}}
\newcommand{\mF}{{\mathcal F}}
\newcommand{\mG}{{\mathcal G}}
\newcommand{\mH}{{\mathcal H}}
\newcommand{\mM}{{\mathcal M}}
\newcommand{\mO}{{\mathcal O}}
\newcommand{\mV}{{\mathcal V}}
\newcommand{\mX}{{\mathcal X}}
\newcommand{\mY}{{\mathcal Y}}
\newcommand{\nc}{\newcommand}
\nc{\on}{\operatorname}
\nc{\Aut} {{\on{\mathrm {Aut}}}}
\nc{\cech} {{\on{\check{\mathrm{C}}\mathrm{ech}}}}
\nc{\rmd} {{\on{\mathrm {d}}}}
\nc{\End} {{\on{\mathrm {End}}}}
\nc{\mEnd} {{\on{\mathcal{E}\mathrm{nd}}}} 
\nc{\Fil} {{\on{\mathrm {Fil}}}}
\nc{\Frac} {{\on{\mathrm {Frac}}}}
\nc{\Gal} {{\on{\mathrm {Gal}}}}
\nc{\GL} {{\on{\mathrm {GL}}}}
\nc{\gl} {{\on{\mathfrak {gl}}}}
\nc{\Gr} {{\on{\mathrm {Gr}}}}
\nc{\Hom} {{\on{\mathrm {Hom}}}}
\nc{\mHom} {{\on{\mathcal{H}\mathrm{om}}}} 
\nc{\id} {{\on{\mathrm {id}}}}
\nc{\Mor} {{\on{\mathrm {Mor}}}}
\nc{\pr} {{\on{\mathrm {pr}}}}
\nc{\PGL} {{\on{\mathrm {PGL}}}}
\nc{\rank} {{\on{\mathrm {rank}}}}
\nc{\Rep} {{\on{\mathrm {Rep}}}}
\nc{\Spec} {{\on{\mathrm {Spec}}}}
\nc{\Spa} {{\on{\mathrm {Spa}}}}
\nc{\Spf} {{\on{\mathrm {Spf}}}}
\titleformat{\section}{\centering\Large\bfseries}{\arabic{section}.}{1em}{}
\titleformat{\subsection}{\centering\large\bfseries}{\arabic{section}.\arabic{subsection}.}{1em}{}
\titleformat{\subsubsection}{\large\bfseries}{\arabic{section}.\arabic{subsection}.\arabic{subsubsection}.}{1em}{}
\newcommand{\para}[1]{{\ \\[5mm]\textbf{\emph{#1}}}}
\newcommand{\add}[1]{}
\newcommand{\addd}[1]{}
\def\hbV {{\widehat{\overline{\Ok}}}}
\def\hbk {{\widehat{\overline{k}}}}
\def\bark {{\overline k}}
\def\Ok {{\mO_k}}
\def\Ohbk{{\mO_{\widehat{\overline k}}}}
\def\Xhbk{{\frakX_{\hbk}}}
\def\Xhbkad{{\frakX_{\hbk}^{\rm ad}}}
\def\Xket{{(\frakX_{k})_{\text{\'et}}}}
\def\Xkadet{{(\frakX_{k}^{\mathrm{ad}})_{\text{\'et}}}}
\def\Xkad{{\frakX_k^{\mathrm{ad}}}}
\def\Xhbkadproet{{(\frakX_{\hbk}^{\mathrm{ad}})_{\text{pro\'et}}}}
\def\Xkadproet{{(\frakX_{k}^{\mathrm{ad}})_{\text{pro\'et}}}}
\def\Xhbkadet{{(\frakX_{\hbk}^{\mathrm{ad}})_{\text{\'et}}}}
\def\Xhbkadproet{{(\frakX_{\hbk}^{\mathrm{ad}})_{\text{pro\'et}}}}
\def\OfrakX{{\mO_{\frakX}}}
\def\OXp{{\mO_X^+}}
\def\hOX{{\hat\mO_X}}
\def\TT{{\underline{T}}}
\def\RK{{\frakR[\frac1p]}}
\def\barR{{\overline \frakR}}
\def\hR{{\widehat \frakR}}
\def\hbR{{\widehat{\overline \frakR}}}
\def\hbRK{{\widehat{\overline{\frakR}}[\frac1p]}}
\def\RbarV{{\frakR_{\mO_{\overline k}}}}
\def\hRbarV{{\frakR\hat{\otimes}{\mO_{\overline k}}}}
\def\hRbark{{\frakR\hat{\otimes}{\overline k}}}
\def\XhbV{{\frakX_{\mO_{\widehat{\overline k}}}}}
\def\XbarV{{\frakX_{\mO_{\overline k}}}}
\def\hbRKhbR{\left(\hbRK,\hbR\right)}
\def\Ainf{{A_{\rm inf}}} %% k
\def\BdRp{{B_{\rm dR}^+}} %% k
\def\BdR{{B_{\rm dR}}} %% k
\def\bBdRp{{\bB_{\rm dR}^+}} %% X
\def\OBdR{{\mO\bB_{\rm dR}}} %% X
\def\bAinfRh{{\bA_{\rm inf}\hbRKhbR}}%% R
\def\OBinfRh{{\mO\bB_{\rm inf}\hbRKhbR}} %% R
\def\OC{{\mO\bC}}
\def\OCRh{{\OC\hbRKhbR}} %% R
\def\tOC{{\widetilde{\mO\bC}}}
\def\tOCRh{{\tOC\hbRKhbR}} %% R
\def\MF{{\mathcal{MF}}}
\def\RH{\mathcal{RH}}
\begin{document}

\title{A note on $p$-adic Simpson correspondence}
\author{Jinbang Yang}
\email{yjb@mail.ustc.edu.cn}
\address{School of Mathematical Sciences, University of Science and Technology of China, Hefei, Anhui 230026, PR China}
\author{Kang Zuo}
\email{zuok@uni-mainz.de}
\address{Institut f\"ur Mathematik, Universit\"at Mainz, Mainz 55099, Germany}
\date{}

\maketitle

\begin{abstract} Given a proper smooth $p$-adic variety, we show a comparison theorem for the $p$-adic Simpson correspondence constructed by Faltings and Riemann-Hilbert correspondence constructed by Scholze. As an application we formulate a sufficient condition for $\overline{\mathbb Q}_p$-local system being de Rham. We study a $p$-adic analogue of Simpson's $\bC^*$-action on the set of isomorphism classes of Higgs bundles and the corresponding Galois action on the set of isomorphism classes of generalized representations of the \'etale fundamental group.
\end{abstract}

\section{Introduction}

A major development in complex nonabelian Hodge theory was made by Hitchin~\cite{Hi} and Simpson~\cite{S1}. 
They constructed an equivalence between the category of irreducible representations of the fundamental group of a compact K\"ahler manifold and the category of stable Higgs bundles with vanishing Chern classes which generalizes the Donaldson-Uhlenbeck-Yau correspondence for stable vector bundles. 
As a $p$-adic analogue, Faltings~\cite{Fal05} constructed a so-called $p$-adic Simpson correspondence between the category of Higgs bundles and that of generalized representations of the \'etale fundamental group for a curve over a $p$-adic field. 
Faltings' construction was systematically studied by Abbes, Gros and Tsuji in~\cite{AGT}. 
Very recently, Scholze made exciting progress in $p$-adic Hodge theory. Together with Bhatt~\cite{BS15}, they introduced the pro\'etale site of an adic space as a refinement of the usual \'etale topology on the adic space. By using the sheaves of periods over the pro\'etale site, Scholze~\cite{Sch13} constructed a Riemann-Hilbert correspondence~\cite[Theorem 7.6]{Sch13}.

In this note we investigate the relationship between a modified Faltings' $p$-adic Simpson correspondence and Scholze's Riemann-Hilbert correspondence. Let $k$ be a complete discrete valuation field of mixed characteristic $(0,p)$ with perfect residue field $\kappa$ of characteristic $p>0$. Let $\frakX$ be a proper scheme over $\Spec(\Ok)$ with semistable reduction. Let 
\[\mE=(V,\nabla, \Fil)\] 
be a filtered de Rham bundle over the generic fiber $\frakX_k$ and 
\[(E,\theta):=(\Gr(V,\Fil),\Gr(\nabla)\cdot \xi^{-1})\]
the graded Higgs bundle attached to $(V,\nabla, \Fil)$, where 
\[\xi=p+\left[ \left(-p, (-p)^{\frac1p}, (-p)^{\frac1{p^2}} \cdots \right) \right]\]
is an element in Fontaine's period ring $\BdRp$.
Recall that Faltings' functor depends on the choice of an $A_2(\mO_k)$-lift of $\frakX$. 
The natural embedding 
\[k\rightarrow \BdRp\]
induces a lift of $\frakX_k$ over $\BdRp$. Under this non-integral lift, we slightly modify Faltings' original construction and get a functor $\bD^{Fal}$ mapping from the category of very small Higgs bundles to the category of generalized representations. 
Under this modified functor, one has that $(E,\theta)$ corresponds to a generalized geometric representation $\bD^{Fal}(E,\theta)$ of the geometric fundamental group $\pi_1(\frakX_{\bark})$.  
On the other hand, by Scholze's Riemann-Hilbert correspondence~\cite[Theorem 7.6]{Sch13} there is a $\bBdRp$-local system \[\mM^{Sch}(\mE) := \Fil^0( \nu^*\mE\otimes\OBdR )^{\nabla=0}\]
over the Scholze's pro\'etale site $\Xkadproet$. 
We show the following comparison theorem.
\begin{thm}[Theorem~\ref{thm:main2}]\label{001} There is an isomorphism of generalized representations
	\[\mM/\xi\mM \cong \bD^{Fal}(E,\theta),\]
	where $\mM$ is the restriction of $\mM^{Sch}(\mE)$ on ${\Xhbkadproet}$. 
\end{thm}

\begin{remark} We make the following two remarks.
\begin{itemize}
\item[i).] As a byproduct in the procession of proving Theorem~\ref{001}, we answer a question raised by Liu-Zhu about the relation between their $p$-adic Simpson correspondence and Faltings' $p$-adic Simpson correspondence (the first sentence in \cite[Section 2]{LZ}); see Corollary~\ref{Cor:LiuZhu}.
\item[ii).] Theorem~\ref{001} is a general form of a result proven by Tan and Tong on the comparison between de Rham and crystalline local systems in the following sense. In \cite[Proposition 3.22]{TT19}, they proved that a crystalline local system $\mathbb L^{\rm cris}$ is also a de Rham local system $\rho^{\rm dR}$. That is that the underlying de Rham bundle $(V,\nabla,\Fil)$ of the Fontaine-Faltings module corresponding the local system $\mathbb L^{\rm cris}$ is also the de Rham bundle corresponding the de Rham local system $\mathbb L^{\rm dR}$ constructed in \cite{Sch13}. It follows that the restriction of the crystalline local system $\rho^{\rm cris}=\rho^{\rm dR}$ to the geometric fundamental group corresponds to the Higgs bundle $(\Gr(V,\Fil),\Gr(\nabla)\cdot \xi^{-1})$ under Faltings' $p$-adic Simpson correspondence. 
\end{itemize}
\end{remark}

Theorem~\ref{001} has the following consequence. For a $\bZ_p$-local system $\bL$ over $\frakX_k$, there are two ways to construct Higgs bundles with trivial Chern classes. On one hand, we may assume $\bL$ to be very small after taking an \'etale base change on $\frakX_k$ and obtain the Higgs bundle 
\[(E,\theta)_\bL\]
corresponding to $\bL$ as a very small generalized geometric local system by the modified Faltings' functor. Note that $(E,\theta)_\bL$ has trivial Chern classes and is slope semistable with respect to any ample line bundle. On the other hand, Scholze defined a filtered $\mO_\Xket$-module with integrable connection 
\[\RH^{Sch}(\bL):=v_*(\OBdR\otimes\bL)\] attached to $\bL$. Subsequently, we introduce the associated $\mO_\Xket$-Higgs bundle \[(E,\theta)_{\bL}^{\mO\bB_{\rm dR}} := \Gr\circ\RH^{Sch}(\bL).\]
Indeed $(E,\theta)_{\bL}^{\mO\bB_{\rm dR}}$ has trivial Chern classes as $\RH^{Sch}(\bL)$ is a filtered de Rham bundle defined over a field of characteristic zero.
\begin{thm}[Theorem~\ref{maithm:subRep}]\label{thm:subRep} For a $\bZ_p$-local system $\bL$ over $\frakX_k$, the following statements hold.
\begin{itemize}
	\item[i).] There is an injective morphism between Higgs bundles
	\[(E,\theta)_\bL^{\mO\bB_{\rm dR}}\rightarrow (E,\theta)_\bL.\]
	Consequently, the Higgs bundle $(E,\theta)_\bL^{\mO\bB_{\rm dR}}$ is a semistable. We view $(E,\theta)_\bL^{\mO\bB_{\rm dR}}$ as a sub-Higgs bundle in $(E,\theta)_\bL$ via this injective morphism.
	\item[ii).] According Faltings' Simpson correspondence, the sub-Higgs bundle $(E,\theta)_\bL^{\mO\bB_{\rm dR}}$ corresponds to a sub-generalized representation. Then this generalized representation is actually a genuine sub-$\mO_{\hbk}$-representation, denoted by $\bW^{\rm geo}_{\Ohbk}$, of $\bL^{\rm geo}\otimes \mO_{\hbk}$. 
\end{itemize}
\end{thm}
\begin{remark}
Faltings conjectured that any semistable small Higgs bundle with trivial Chern classes corresponds to a $\hbk$-geometric representation. Theorem~\ref{thm:subRep}~(ii) is probably the easiest case in testing his conjecture as $(E,\theta)_{\bL}^{\mO\bB_{\rm dR}}$ is contained in the Higgs bundle $(E,\theta)_\bL$ arising from the $\hbk$-geometric representation $\bL^{\rm geo}\otimes \hbk$. The coefficient reduction from $\hat{\mO}_\frakX^+$ to $\mO_{\hbk}$ for the generalized representation corresponding to $(E,\theta)_{\bL}^{\mO\bB_{\rm dR}}$ actually comes from $\bL^{\rm geo}$ by using Faltings' functor of twisted pull-back of Higgs bundles and the property of twisted pull-back trivializable.
\end{remark}

\begin{corollary}\label{cor:001}
If $\bL$ is geometrically absolutely irreducible and $(E,\theta)_{\bL}^{\mO\bB_{\rm dR}}$ is nontrivial,
then $\bL$ is a de Rham representation.
\end{corollary}
\begin{rmk}
The result in Theorem~\ref{thm:subRep} and Corollary~\ref{cor:001} holds for $\overline{\bQ}_p$-local system, which can be reduced to the $\bZ_p$-local system case by descending the coefficient field and choosing a lattice in this descent. 
\end{rmk}

The action of Galois group $\Gal(\bark/k)$ on $\frakX_{\bark}$ induces a natural action on the category of generalized representations. In Section~\ref{sec:C^*}, by carefully checking the construction of Faltings' $p$-adic Simpson correspondence one finds that the corresponding action on the category of Higgs bundles can be basically described as the usual Galois action on the category of usual Higgs bundles with the action of the Higgs field twisted by a $1$-cocycle induced the element $\xi\in B_{dR}^+$. 
\begin{theorem}[Theorem~\ref{thm:rk2}]\label{Thm:rank2} The following statements hold.
\begin{itemize}
	\item[i).] A generalized representation corresponding to a graded Higgs bundle over $k$ is $\Gal(\bark/k)$-invariant and conversely, a Higgs bundle corresponding to a Galois-invariant generalized representation is nilpotent.
	\item[ii).] A rank-$2$ generalized representation is $\Gal(\bark/k)$-invariant if and only if the corresponding Higgs bundle is graded and defined over $k$ up to an isomorphism. 
\end{itemize}
\end{theorem}
We believe that ii) in Theorem~\ref{Thm:rank2} holds for representations of any rank.
\begin{conjecture}[Conjecture~\ref{conj:galHiggs}]
A generalized representation is $\Gal(\bark/k)$-invariant if and only if the corresponding Higgs bundle is graded and defined over $k$ up to an isomorphism. 
\end{conjecture}

\begin{remark} We make the following two remarks.
\begin{itemize}
	\item[i).] Shortly after submitting our note in Arxiv, Petrov~\cite{Pet20} arXiv:2012.13372 proved that a geometrically irreducible $\overline{\bQ}_p$-local system $\bL$ is de Rham after a twist by a character of $\text{Gal}(\bar k/k)$, which was conjectured by D. Litt. Though Corollary~\ref{cor:001}. in our note does not prove Litt's conjecture, it is closely related to Petrov's argument. The crucial points in Petrov's paper are:
	\begin{enumerate}
		\item globalizing Sen operator;
		\item using an eigenvalue of Sen operator to find a character $\chi$ such that $\RH^{Sch}(\bL\otimes\chi)\not=0$, it means equivalently $(E,\theta)^\OBdR_{\bL\otimes\chi}\not=0$;
		\item Using Diao, Lan, Liu and Zhu's result to get a comparison isomorphism between endomorphisms of $\bL$ and endomorphisms of Liu and Zhu's Higgs bundle $\mH(\bL)$, and the irreducibility of $\bL^\text{geo}$ implies 
		$\rank \RH^{Sch}(\bL\otimes\chi) =\rank (\bL\otimes\chi).$
	\end{enumerate}
	It is clear that once (2) is done, we may also apply Corollary~\ref{cor:001} to the non-trivial sub Higgs bundle $(E,\theta)^\OBdR_{\bL\otimes\chi}\subset (E,\theta)_{\bL\otimes\chi},$ which implies that $\bL\otimes\chi$ is de Rham.\\[.2cm]
	Note that Petrov actually only needs the assumption that
	$\text{End}(\bL|_{X_{\bar k}})=\bQ_p$,
	which is weaker than our assumption that $\bL^\mathrm{geo}$ is absolutely irreducible.
	\item[ii).] By applying i) in Theorem~\ref{Thm:rank2} to the graded Higgs bundle $(E,\theta)^\OBdR_{\bL}$,
	we see $\bW^{\rm geo}_{\mO_{\hbk}}$ is actually $\Gal(\bark/k)$-invariant. 
	In the previous version of our note we conjectured further that $\bW^{\rm geo}_{\mO_{\hbk}}$ will eventually 
	descend to a $\bZ_p$-\'etale subrepresentation
	$\bW\subset \bL. $
	However, it turns out that the conjecture is wrong.
	Thanks to Petrov, in an email exchange he provided us an example due to Berger \cite{Ber11} that neither the $\mO_{\hbk}$-coefficient of $\mathbb W^{\rm geo}_{\mO_{\hbk}}$ can be reduced to $\mO_{\bark}$ nor it can be lifted to a sub local system of $\bL$ over $X_{k'}$ for any finite extension of $k$.
\end{itemize} 
\end{remark}

{\bf Acknowledgment } We are grateful to Alexander Petrov for interesting discussions, in particular, the lifting property of the geometric sub local system constructed in Theorem~\ref{thm:subRep}.

\section{Preliminaries}~\label{section_preliminaries}

\subsection{Faltings' p-adic Simpson correspondence}
We recall from ~\cite{Fal05} the $p$-adic Simpson functor constructed by Faltings from the category of Higgs bundles to that of generalized representations of the \'etale fundamental group for liftable schemes over a $p$-adic field.\\[0cm]

Let $k$ be a complete discrete valuation field of mixed characteristic $(0,p)$ with perfect residue field $\kappa$ of characteristic $p>0$. Denote by $\Ok$ the ring of integers of $k$ and $\overline{k}$ the algebraic closure of $k$. For any scheme $\mY$ over $\mO_k$, denote by $\mY_k$ the generic fiber of $\mY$ and $\mY_{\kappa}$ the special fiber. Let $\frakX$ be a proper scheme over $\Ok$ with semistable reduction.

\subsubsection{The local correspondence}
Let $\frakU=\Spec(\frakR)$ be a small affine open subset of $\frakX$, that means $\frakR$ is \'etale over a toroidal model. By adjoining roots of characters of the torus, we obtain a subextension $\frakR_\infty$ of $\barR$ (the integral closure of $\frakR$ in the maximal \'etale cover of $\frakU_k$), which is Galois over \[\RbarV=\frakR\otimes_{\Ok}\mO_{\overline{k}}\]
with Galois group denoted by 
\[\Delta_\infty=\Gal(\frakR_\infty/\RbarV).\]
Let 
\[\Delta=\Gal(\barR/\RbarV)\]
and let $\hR$, $\hR_\infty$, $\hbR$, $\hRbarV$ be the $p$-adic completions of the corresponding rings.\\[.2cm]
We recall from \cite{Fal05} the notions of Higgs modules twisted by $\xi^{-1}$ and generalized representations, where $\xi$ is the element 
\[p + \left[\left(-p,(-p)^{\frac1p},(-p)^{\frac1{p^2}}\cdots \right)\right]\]
in Fontaine's period ring $\BdRp$.
\begin{defi}[{\cite[Section 3]{Fal05}}] Keep the notation as above.
\begin{itemize}
\item[i).] A \emph{Higgs module} is a pair $(E,\theta)$, where $E$ is a finitely generated free $\hRbarV$-module, and the Higgs field $\theta$ is an element in $\End_{\hRbarV}(E) \otimes_\frakR\xi^{-1}\Omega_{\frakR/{\Ok}}^1$ satisfies the integrable condition 
\[\theta\wedge\theta=0.\]
The Higgs module $(E,\theta)$ is called \emph{small} if $p^\alpha \mid \theta$ for some $\alpha>\frac1{p-1}$.
\item[ii).] A \emph{Higgs module on the generic fiber} is pair $(E,\theta)$, where $E$ is a finitely generated free $\hRbark$-module, and the Higgs field $\theta$ is contained in $\End_{\hRbark}(E) \otimes_\frakR\xi^{-1}\Omega_{\frakR/{\Ok}}^1$ satisfying the integrable condition. The Higgs module $(E,\theta)$ on the generic fiber is called \emph{small} if the $i$-th coefficient $\lambda_i$ of the characteristic polynomial of the Higgs field $\theta$ is contained in $\Omega_{\frakR/{\Ok}}^{\otimes i}\otimes (\hRbarV) \cdot p^{i\alpha}\xi^{-i}$ for all $i$ and some $\alpha>\frac2{p-1}$.
\end{itemize}
\end{defi}

\begin{defi}[{\cite[Section 3]{Fal05}}] Keep the notation as above.
\begin{itemize}
\item[i).] A \emph{generalized representation} of $\Delta$ is a pair $(\bV,\rho)$, where $\bV$ is a finitely generated free $\hbR$-module, and $\rho$ is a continuous semilinear action of $\Delta$ on $\bV$. The generalized representation $(\bV,\rho)$ is called \emph{small} if there exists a $\Delta$-invariant basis of the free $\hbR/p^{2\alpha}$-module $\bV/p^{2\alpha}$.
\item[ii).] A \emph{generalized $\bQ_p$-representation} of $\Delta$ is a pair $(\bV,\rho)$, where $\bV$ is a finitely generated free $\hbRK$-module, and $\rho$ is a continuous semilinear action of $\Delta$ on $\bV$. The generalized $\bQ_p$-representation $(\bV,\rho)$ is called \emph{small} if it comes from a small generalized representation by tensoring $\bQ_p$.
\end{itemize}
\end{defi}

\begin{thm}[{\cite[Theorem 3]{Fal05}} Local version of $p$-adic Simpson correspondence] \label{Fal_local} For a small affine space $\frakU$, there exist an equivalent functor $\bD^{Fal}_{\frakU}$ mapping from the category of small Higgs bundles to the category of small generalized representations and an equivalent functor, still denoted by $\bD^{Fal}_{\frakU}$, mapping from the category of small Higgs bundles on the generic fiber to the category of small generalized $\bQ_p$-representations.
\end{thm}

The construction of the second functor in the above Theorem is similar to that of the first one. In the following we only recall from \cite[p.850-851]{Fal05} the construction of the first equivalent functor $\bD^{Fal}_{\frakU}$ sending a Higgs bundle $(E,\theta)$ to a generalized representation 
\[(\bV,\rho)=\bD^{Fal}_{\frakU}(E,\theta).\]
The underlying module of the generalized representation is defined to be 
\begin{equation} \label{underlyingMod}
	\bV := E\otimes_{\hRbarV} \hbR.
\end{equation}
The semilinear action $\rho$ is a little more complicated. Before giving its construction, some notation $\widetilde{\frakR},\tau,\TT,\widetilde{\TT}$ is needed to introduce as follows. Let
\[\frakU=\Spec(\frakR)\]
be a small affine space over $\Ok$. Firstly, let $\widetilde{\frakR}$ be a chosen smooth algebra over 
\[A_2({\Ok}):= \Ainf/\xi^2\Ainf\]
such that it is a lift of $\hRbarV$ over $A_2(\Ok)$ (that means $\hRbarV\cong\widetilde{\frakR}\otimes_{A_2(\Ok)} \mO_\bark$), where 
\[\Ainf= W(\varprojlim_{\Phi} \mO_{\hbk}/p)\]
is an Fontaine's period ring; see \cite[Sections 2.1 and 2.2]{Fon82} for more details. Note that any two such lifts are isomorphic but not canonically isomorphic. Secondly, let 
\[\tau \colon \widetilde{\frakR}\rightarrow A_2(\frakR):=\bAinfRh/\xi^2\bAinfRh\] 
be a chosen lift of the inclusion 
\[\frakR\hookrightarrow \hbR\]
over $A_2({\Ok})$, where
\[\bAinfRh=W\left(\varprojlim\limits_{\Phi} \barR/p\right)\]
is a Witt ring with coefficients in $\varprojlim\limits_{\Phi} \barR/p$, see \cite[p.28]{Fal89} or \cite[p.35]{Sch13} for more details. Ultimately, let 
\[\TT=\{T_1,\cdots,T_d\}\]
be chosen parameters of $\frakR$ over $\Ok$ and let 
\[\widetilde{\TT}=\{\widetilde{T}_1,\cdots,\widetilde{T}_d\}\]
be chosen lifts of $\TT$ in $\widetilde{\frakR}$, where $d$ is the relative dimension of $\frakR$ over $\Ok$. We have the following commutative diagram
\begin{equation}
\xymatrix{
A_2({\Ok}) \ar[r] \ar@{->>}[d] & \widetilde{\frakR} \ar[r]^{\tau} \ar@{->>}[d] & A_2(\frakR) \ar@{->>}[d] \\
\hbV \ar[r] & \hRbarV \ar[r] & \hbR\\
}
\end{equation}

Using the fact that the Higgs field $\theta$ behaves like a connection, for any two lifts 
\[\tau,\tau'\colon \widetilde{\frakR} \rightarrow A_2(\frakR),\]
Faltings~\cite[page 851]{Fal05} constructed an $\hbR$-linear isomorphism 
\[\beta_{\tau,\tau'} \colon \bV \rightarrow \bV,\]
defined as, for any $m\in E$,
\begin{equation}\label{equ:taylorHiggs}
\beta_{\tau,\tau'}(m\otimes1) = \sum_{I=(i_1,\cdots,i_d)\in \bN^d} \theta(\xi\partial)^I(m)/I! \otimes \left(\frac{\tau(\widetilde{T}) - \tau'(\widetilde{T})}{\xi}\right)^I,
\end{equation}
where 
\[\theta(\xi\partial)^I=\prod_{\ell=1}^{d}\theta(\xi\partial_\ell)^{i_{\ell}}, \quad I!=\prod_{\ell=1}^{d}i_{\ell}!,\]
\[\left(\frac{\tau(\widetilde{T}) - \tau'(\widetilde{T})}{\xi}\right)^I=\prod_{\ell=1}^{d}\left(\frac{\tau(\widetilde{T_\ell}) - \tau'(\widetilde{T_\ell})}{\xi}\right)^{i_{\ell}},\]
and
 \[\partial_i=\partial/\partial T_i\]
is the dual base of $\frakR$-derivations. For any $\sigma\in \Delta$, it induces an automorphism of $A_2(\frakR)$ in a natural way. Then the composition 
\[\sigma\circ \tau\colon \widetilde{\frakR}\rightarrow A_2(\frakR)\]
is another lift of the inclusion
\[\frakR\hookrightarrow \hbR\]
over $A_2({\Ok})$. One has
\begin{equation}
\beta_{\sigma\circ\tau,\tau}(m\otimes1) = \sum_{I\in \bN^d} \theta(\xi\partial)^I(m)/I! \otimes \left(\frac{\tau(\widetilde{T})^{\sigma} - \tau(\widetilde{T})}{\xi}\right)^I.
\end{equation}
Then the composition 
\[\rho(\sigma):=\beta_{\sigma\circ\tau,\tau}\circ(1\otimes \sigma) \colon \bV\rightarrow \bV\]
is a $\sigma$-semilinear map over $\hbR$. The semilinear action $\rho$ of $\Delta$ on the free $\hbR$-module $\bV$ is defined to be sending $\sigma$ to $\rho(\sigma)$. Note that $\rho$ is independent of the choice of those data $(\widetilde{\frakR},\tau,\TT,\widetilde{\TT})$ up to canonical isomorphism. In other words, there exists a canonical isomorphism 
\begin{equation} \label{gluing isomorphisms}
(\bV,\rho_{(\widetilde{\frakR},\tau,\TT,\widetilde{\TT})}) \cong (\bV,\rho_{(\widetilde{\frakR}',\tau',\TT',\widetilde{\TT}')})
\end{equation}
for two different choices $(\widetilde{\frakR},\tau,\TT,\widetilde{\TT})$ and $(\widetilde{\frakR}',\tau',\TT',\widetilde{\TT}')$.

\subsubsection{The global correspondence}\label{section_global_corr}
In this subsection, we recall from \cite[Section 4]{Fal05} the globalization of Higgs bundles and generalized representations.

Let $\frakX$ be a proper scheme over $\Spec({\Ok})$ with semistable reduction. A \emph{generalized} ($\bQ_p$-) \emph{representation} of the geometric fundamental group $\pi_1(\frakX_\bark)$ of generic fiber of $\frakX$ is defined to be a compatible system of generalized ($\bQ_p$-)representations on a covering of $\frakX$ by small open affine subsets. It is called \emph{small}, if the generalized representations in the system are small. A \emph{Higgs bundle} over $\frakX_\Ohbk$ (resp. over $\frakX_\hbk$) is a vector bundle $E$ on $\frakX_\Ohbk$ (resp. on $\frakX_\hbk$) together with a Higgs field $\theta$ in $\End(E)\otimes\xi^{-1}\Omega^1_{\frakX/{\Ok}}$. A Higgs bundle $(E,\theta)$ over $\frakX_\Ohbk$ is called \emph{small} if \[p^\alpha\mid \theta \text{, for some } \alpha>\frac1{p-1}\]
 and a Higgs bundle $(E,\theta)$ over $\frakX_\hbk$ is called \emph{small} if the $i$-th coefficient $\lambda_i$ of the characteristic polynomial of the Higgs field $\theta$ is contained in $p^{i\alpha}\xi^{-i}\Omega_{\frakX_\Ohbk/{\Ohbk}}^{\otimes i}$ for all $i=1,2,\cdots,d$ and some $\alpha>\frac2{p-1}$, where $d$ is the relative dimension of $\frakX$ over $\Ok$.\\[0mm]

By gluing the local equivalences in Theorem~\ref{Fal_local}, global equivalences of categories are obtained.
\begin{thm}[{\cite[Theorem 5]{Fal05}} Global version of $p$-adic Simpson correspondence]\label{thm:Flatings}
Assume that $\frakX$ is liftable to $A_2(\Ok)$. Then there exist an equivalence mapping from the category of small Higgs bundles over $\frakX_\Ohbk$ to that of small generalized representations of $\pi_1(\frakX_\bark)$, and an equivalence mapping from the category of small Higgs bundles over $\frakX_\hbk$ to that of small generalized $\bQ_p$-representations of $\pi_1(\frakX_\bark)$.
\end{thm}

Recall from \cite[p.855]{Fal05} the construction of the first global equivalent functor sending a Higgs bundle $(E,\theta)$ to a generalized representation $(\bV,\rho)$ briefly. Let $\widetilde{\frakX}$ be a chosen lift of $\frakX$ over $A_2({\Ok})$. Then it is a first order thickening of $\frakX_{\mO_\bark}$, and so its underlying topological space is the same as that of $\frakX_{\mO_\bark}$. For any covering $\{\frakU_i\}_i$ of $\frakX$ by small open affine subsets \[\frakU_i=\Spec(\frakR_i),\]
one also gets a covering $\{\widetilde{\frakU}_i\}_i$ of $\widetilde{\frakX}$ by open affine subsets \[\widetilde{\frakU}_i=\Spec(\widetilde{\frakR}_i)\]
satisfying 
\[\frakR_{i,\mO_\bark}=\widetilde{\frakR}_i \otimes_{A_2(\Ok)} \mO_\bark.\]
For each $i$, fix an embedding
\[\tau_i\colon \widetilde{\frakR}_i= \mO_{\widetilde{\frakX}} (\widetilde{\frakU}_i) \hookrightarrow A_2(\frakR_i),\]
local parameters $\TT_i$ and $\widetilde{\TT}_i$. Recall from Theorem~\ref{Fal_local} that for each $\frakU_i$, there is an equivalent functor $\bD_{\frakU_i}^{Fal}$ mapping from the category of small Higgs bundles to that of small generalized representations
\[(E,\theta) \mapsto \bD^{Fal}_{\frakU_i}(E,\theta):=\left(E(\frakU_i)\otimes_{\frakR_i} \hbR_i,\rho_{(\widetilde{\frakR}_i,\tau_i,\TT_i,\widetilde{\TT}_i)}\right).\]
For a small Higgs bundle $(E,\theta)$ over $\XbarV$, one gets a family of local small generalized representations:
\[\Big(\bD^{Fal}_{\frakU_i}(E\mid_{\frakU_i},\theta\mid_{\frakU_i})\Big)_i := \left(E(\frakU_i)\otimes_{\frakR_i}\hbR_i,\rho_{(\widetilde{\frakR}_i,\tau_i,\TT_i,\widetilde{\TT}_i)}\right)_i.\]
These local generalized representations are glued into a global small generalized representation via the isomorphisms in \eqref{gluing isomorphisms} over $\frakU_{i}\cap\frakU_j$, which is denoted by
\begin{equation}\label{Fal_functor}
\bD^{Fal}_{\widetilde{\frakX}}(E,\theta).
\end{equation}

\begin{remark}[{\cite[p.855]{Fal05}}]\label{rmk_diff_lift} Note that the above global equivalence $\bD^{Fal}_{\widetilde{\frakX}}$ depends on the choice of such a lift $\widetilde{\frakX}$. For another lift $\widetilde{\frakX}'$, since the set of all lifts forms a $H^1(\XhbV,T_{\XhbV}^1\cdot\xi)$-torsor, one has 
\[c:= \widetilde{\frakX}' -\widetilde{\frakX} \in H^1(\XhbV,T_{\XhbV}^1\cdot\xi).\]
There is a nature commutative diagram 
\begin{equation*}
\xymatrix@R=2cm{
\{\text{small Higgs bundles over $\frakX_\Ohbk$}\} \ar[r]^-{\bD^{Fal}_{\widetilde{\frakX}}} \ar[d]^{\text{twisted by } c} & \{\txt{small generalized representations of $\pi_1(\frakX_\bark)$}\} \ar@{=}[d]\\ 
\{\text{small Higgs bundles over $\frakX_\Ohbk$}\} \ar[r]^-{\bD^{Fal}_{\widetilde{\frakX}'}} & \{\text{small generalized representations of $\pi_1(\frakX_\bark)$}\} \\ 
}
\end{equation*} 
\end{remark}

\subsubsection{Modification of Faltings' construction.} In this subsection, we modify Faltings' functor $\bD^{Fal}_{\widetilde{\frakX}}$ in Section~\ref{section_global_corr} in order to compare this functor with the functor defined by Scholze in~\cite[Theorem 7.6]{Sch13}. Note that Faltings' functor depends on the choice of the lift $\widetilde{\frakX}$, while Scholze's functor does not. Hence we need to find a canonical lift for Faltings functor. Actually there does exist some canonical one, but it is not integral in general. This is why we need to modify Faltings' functor. The idea is already given in the last two sentences in the second paragraph in \cite[Section 4]{Fal05}.\\[0mm]

Let $k$ be a $p$-adic field with maximal ideal of the ring of integers generated by an element $\pi$ and with residue field $\kappa$. Recall from \cite[Section 2.8]{Fon82} Fontaine's period ring 
\[\BdRp:= \varprojlim_{n\in \bN} W(R)[\frac1p]/(\xi^n),\]
the maximal unramified subfield $W(\kappa)[\frac1p]$ in $k$ is naturally contained in the period ring $\BdRp$. Since $\BdRp$ is a complete discrete valuation ring with residue field $\hbk$. By \cite[Theorem 0.21]{FO}, the element $\pi$ can be uniquely lifted to an element in the period ring $\BdRp$, which has the same minimal polynomial as $\pi$ over $W(\kappa)[\frac1p]$. Thus the field 
\[k=W(\kappa)[\frac1p][\pi]\]
is naturally embedded into $\BdRp$ by sending $\pi$ to that lift. The composition 
\[k\hookrightarrow\BdRp\twoheadrightarrow W(R)[\frac1p]/(\xi^2)=A_2({\Ok})[\frac1p]\] 
is still an embedding since $k$ is a field. Denote by $\alpha_0$ the smallest nonnegative rational number such that $\pi \in p^{-\alpha_0} A_2(\Ok)$. Then one has 
\[A_2(\Ok)[\pi] = A_2(\Ok) + \Ohbk\cdot\frac{\xi}{p^{\alpha_0}}.\]
Denote the $p$-adic completion of $\frakR\otimes_{\Ok} A_2({\Ok})[\pi]$ by
\[\widetilde{\frakR}:= \frakR\hat{\otimes}_{\Ok} A_2({\Ok})[\pi].\]
It is a lift of $\hRbarV$ over $A_2(\Ok)[\pi]$, which will be called the \emph{canonical lift} over $A_2(\Ok)[\pi]$. Let \[\TT=\{T_1,\cdots,T_d\}\]
be some chosen parameters of $\frakR$ and 
\[\tau\colon \widetilde{\frakR} \rightarrow A_2(\frakR)[\pi]\]
be a chosen lift of the inclusion $\frakR\hookrightarrow \hbR$ over $A_2(\Ok)[\pi]$. We identify $\frakR$ with subring $\frakR\otimes1$ in $\widetilde{\frakR}$. It is clear that 
\[\tau(\frakR) \subset A_2(\frakR) +\hbR\cdot\frac{\xi}{p^{\alpha_0}}.\]
Note that elements in $\TT$ are also parameters of $\widetilde{\frakR}$ over $A_2(\Ok)[\pi]$. Thus the lift $\tau$ is uniquely determined by its restriction on $\frakR$. By abuse of notation, the restriction of $\tau$ on $\frakR$ is still denoted by $\tau$
\begin{equation}\label{equ:lift_tau}
\tau\colon \frakR \rightarrow A_2(\frakR)[\pi].
\end{equation}
For another lift
\[\tau'\colon \widetilde{\frakR} \rightarrow A_2(\frakR)[\pi],\]
one has
\[\tau(T_i) - \tau'(T_i) \in \hbR\cdot\frac{\xi}{p^{\alpha_0}}.\]

For a small Higgs bundle $(E,\theta)$, the series in \eqref{equ:taylorHiggs} may no longer converge. But if we require the Higgs field $\theta$ to be sufficiently small, then the series in \eqref{equ:taylorHiggs} is still convergent. This leads us to give the following definition.
\begin{defi}\label{def:verysmall}
	A Higgs bundle $(E,\theta)$ is called \emph{very small} if 
	\[p^{\alpha}\mid\theta \text{, for some } \alpha>\alpha_0+\frac{1}{p-1}.\]
	A Higgs bundle $(E,\theta)$ on the generic fiber is called \emph{very small} if the $i$-th coefficient $\lambda_i$ of the characteristic polynomial of the Higgs field $\theta$ is contained in $\Omega_{\frakX_\Ohbk/{\Ohbk}}^{\otimes i}\otimes p^{i\alpha}\xi^{-i}$ for all $i$ and some $\alpha>\alpha_0+\frac2{p-1}$.
\end{defi}

Let 
\[\tau,\tau'\colon \widetilde{\frakR} \rightarrow A_2(\frakR)[\pi]\]
be two lifts of the inclusion 
\[\frakR\hookrightarrow \hbR\]
over $A_2(\Ok)[\pi]$. Similar as \eqref{equ:taylorHiggs}, we define an $\hbR$-linear isomorphism 
\[\beta_{\tau,\tau'}\colon \bV\rightarrow \bV\]
as follows
\begin{equation}\label{modified_iso}
\beta_{\tau,\tau'}(m\otimes1) := \sum_{I\in\bN^d} \frac{\theta(p^{-\alpha_0}\xi\partial)^I(m)}{I!}\otimes\left(\frac{\tau(T)-\tau'(T)}{p^{-\alpha_0}\xi}\right)^I.
\end{equation}
For any $\sigma\in \Delta$, we set
\begin{equation}\label{equ:galoisAction}
\rho_\tau(\sigma):=\beta_{\sigma\circ\tau,\tau}\circ(1\otimes \sigma).
\end{equation}
Then we get a generalized representation $(\bV,\rho_\tau)$. The isomorphism in \eqref{modified_iso} is an isomorphism of generalized representations
\[\beta_{\tau,\tau'} \colon (\bV,\rho_\tau) \xrightarrow{\sim} (\bV,\rho_{\tau'}),\]
 which means that the generalized representation $(\bV,\rho_\tau)$ is independent of the choice of the lift $\tau$ up to isomorphism. Thus one gets a well-defined local modified functor sending a small Higgs bundles $(E,\theta)$ to the corresponding generalized representation $(\bV,\rho_\tau)$. For a small open affine covering of $\frakX$, the local functors are glued into a global functor $\bD^{Fal}$, which will be called modified Faltings' functor.
\begin{thm}[modified global version of $p$-adic Simpson correspondence]\label{thm:modifiedFunctor}
Keep notation as above. The functor $\bD^{Fal}$ mapping from the category of very small Higgs bundles over $\frakX_\Ohbk$ to that of generalized representations of $\pi_1(\frakX_\bark)$ is fully faithful. This functor also extends to the $\bQ_p$-theory. That is, there exists a fully faithful functor, still denoted by $\bD^{Fal}$, mapping from the category of very small Higgs bundles over $\frakX_\hbk$ to that of generalized $\bQ_p$-representations of $\pi_1(\frakX_\bark)$.
\end{thm}

\begin{proof} Let $\widetilde{\frakX}'$ be a chosen lift of $\frakX$ over $A_2(\Ok)$ and $\bD^{Fal}_{\widetilde{\frakX}'}$ be the Faltings' functor in~\eqref{Fal_functor}. Then by tensoring with $A_2(\Ok)[\pi]$, we get a lift $\widetilde{\frakX}'\otimes_{A_2(\Ok)} A_2(\Ok)[\pi]$ of $\frakX$ over $A_2(\Ok)[\pi]$. Let 
\[\widetilde{\frakX} = \frakX\hat\otimes_{\Ok} A_2(\Ok)[\pi]\]
be the canonical lift of $\frakX$ over $A_2(\Ok)[\pi]$. As in Remark~\ref{rmk_diff_lift}, the difference between $\widetilde{\frakX}$ and $\widetilde{\frakX}'\otimes_{A_2(\Ok)} A_2(\Ok)[\pi]$ is a $1$-Cech-cocycle in $H^1(\XhbV,T_{\XhbV}^1\cdot p^{-\alpha_0}\xi)$. Twisted a Higgs bundle by this cocycle, one constructs an equivalent self-functor of the category of very small Higgs bundles. The similar argument as in \cite[page 855]{Fal05} implies that the functor $\bD^{Fal}_{\widetilde{\frakX}'}$ and the modified functor $\bD^{Fal}$ fit into the following commutative diagram
	\begin{equation*}
	\xymatrix@C=2cm{
		\{\text{very small Higgs bundles}\} \ar[d]^{\text{twisted by cocycle}} \ar[r]^-{\bD^{Fal}}& \{\text{generalized representations}\} \ar@{=}[d] \\
		\{\text{very small Higgs bundles}\} \ar[r]^-{\bD^{Fal}_{\widetilde{\frakX}'}} & \{\text{generalized representations}\} \\
	}
	\end{equation*}
	Since $\bD^{Fal}_{\widetilde{\frakX}'}$ is an equivalent functor between the category of small Higgs bundles and that of small generalized representations, the functor $\bD^{Fal}$ is fully faithful.
\end{proof}

\begin{defi}\label{def:FalFun} The essential images of $\bD^{Fal}$ are called \emph{very small generalized} ($\bQ_p$-) \emph{representations}. Thus $\bD^{Fal}$ is an equivalent functor mapping from the category of very small Higgs bundles to that of very small generalized representations.
\begin{equation*}
	\xymatrix@C=1cm{
		\{\text{very small Higgs bundles}\} \ar@/^2pt/[r]^-{\bD^{Fal}} & \{\text{very small generalized representations}\} \ar@/^2pt/[l]^-{\mH^{Fal}} \\ 
	}
\end{equation*}
where $\mH^{Fal}$ denotes the quasi-inverse functor of $\bD^{Fal}$.
\end{defi}

\subsection{Description of generalized representations via sheaves of periodic rings}\label{section_discrip_via_Sheaves}

In this subsection, we recall some notation and facts for later.\\[0mm]

Let $\frakX$ be a proper scheme over $\Ok$ with semistable reduction. Denote by $\frakX_k$ the smooth generic fiber and denote by $\Xkad$ the corresponding adic space. Let $\Xkadproet$ be the pro\'etale site~\cite[Definition 3.9]{Sch13} of $\Xkad$. 
There is a natural projection 
\[\nu\colon \Xkadproet \rightarrow \Xkadet\]
mapping from this pro\'etale site to the \'etale site of $\Xkad$. Since our basis space $\frakX$ is proper, according~\cite[Theorem 9.1]{Sch13} or K\"opf's thesis~\cite{Kopf}, There is an equivalence between the category of the coherent sheaves over $\frakX_k$ and that of the coherent sheaves over $\Xkad$. By abusing of notation, we will use a single notation standard for a coherent sheaf over $\frakX_k$ and its corresponding sheaf over $\Xkad$. For example, once one gets a vector bundle $\mE$ over $\Xkad$ we will also denote by $\mE$ the corresponding vector bundle over $\frakX_k$ and vice versa.\\[0mm] 

We Recall from~\cite{Sch13} some sheaves over $\Xkadproet$. By pulling back the structure sheaf $\mO_{\Xkadet}$ via this projection $\nu$, one obtains the \emph{uncompleted structure sheaf} 
\[\mO_{\Xkad}=\nu^*\mO_{\Xkadet}\] 
and the \emph{integral uncompleted structure sheaf} 
\[\mO^+_{\Xkad}=\nu^*\mO^+_{\Xkadet}\]
of the pro\'etale site $\Xkadproet$~\cite[Definition 4.1(i)]{Sch13}.
Taking the $p$-adic completion, one obtains the \emph{integral completed structure sheaf} 
\[\hat\mO^+_{\Xkad}=\varprojlim_n \mO^+_{\Xkad}/p^n\]
and the \emph{completed structure sheaf} 
\[\hat\mO_{\Xkad} = \hat\mO^+_{\Xkad}[\frac1p]\]
of the pro\'etale site $\Xkadproet$~\cite[Definition 4.1(ii)]{Sch13}. \\

Let $\Xhbkadproet$ be the pro\'etale site of $\Xhbkad$. One also has a natural projections 
\[\nu'\colon \Xhbkadproet \rightarrow \Xhbkadet\]
and some structure sheaves over this pro\'etale site $\Xhbkadproet$.
The map 
\[\Xhbk\rightarrow \frakX_k\]
induces natural morphisms 
\[\Xhbkadproet \rightarrow \Xkadproet\]
and 
\[\Xhbkadet\rightarrow\Xkadet.\]
These morphisms fit in the following commutative diagram
\begin{equation}\label{diag_cano_map}
\xymatrix{
	\Xhbkadproet\ar[r]^{\nu'} \ar[d] & \Xhbkadet \ar[d] \\
	\Xkadproet \ar[r]^{\nu} & \Xkadet,\\
}
\end{equation}
The structure sheaves of $\Xhbkadproet$ and of $\Xkadproet$ are compatible. That is 
\[\hat{\mO}_{\Xkad}^+\mid_{\Xhbkadproet} = \hat{\mO}_{\Xhbkad}^+ \text{ and } \hat{\mO}_{\Xkad}\mid_{\Xhbkadproet} = \hat{\mO}_{\Xhbkad}.\]
\\[0mm]

For any small open affine subset 
\[\frakU =\Spec(\frakR)\subset \frakX,\]
denote
\begin{equation}\label{universal-covering}
\overline{\frakU} = \varprojlim \limits_{V/\frakU_{\hbk}^{\mathrm{ad}}: \text{finite \'etale}} V,
\end{equation}
which is an object in the site $\Xhbkadproet$. From the definitions of the complete structure sheaves, one has 
\[\hat{\mO}_{\Xhbkad}^+(\overline{\frakU}) =\hbR \text{ and } \hat{\mO}_{\Xhbkad}(\overline{\frakU}) = \hbR[\frac1p].\]

The following definition follows from~\cite[Definition 7.1]{Sch13}, which will be used to describe the generalized representations. 
\begin{defi}
An \emph{$\hat{\mO}_{\Xhbkad}^+$-local system} is a sheaf of $\hat{\mO}_{\Xhbkad}^+$-module that is locally on $\Xkadproet$ free of finite rank.
\end{defi} 

Let 
\[\frakU=\Spec(\frakR)\]
be a small affine open subset of $\frakX$. Since 
\[\hat{\mO}_{\Xhbkad}^+(\overline{\frakU}) =\hbR,\]
one has 
\[\Aut(\overline{\frakU}/\frakU_\hbk)=\Aut(\hbR/\frakR_\hbV).\] 
This automorphic group of the basis space naturally induces a semilinear action of $\Aut(\hbR/\frakR_\hbV)$ on the $\hbR$-module $\Gamma(\overline{\frakU},\mM)$ for any $\hat\mO^+_{\frakU_\hbk^{\mathrm{ad}}}$-local system $\mM$. Then $\Gamma(\overline{\frakU},-)$ forms an equivalent functor mapping from the category of $\widehat{\mO}^+_{\frakU_{\hbk}^{\mathrm{ad}}}$-local systems to that of generalized representations of the fundamental group of $\frakU_k$. 
Let $\{\frakU_i\}_{i\in I}$ be a small affine covering of $\frakX$. 
Then the family of functors $\Gamma(\overline{\frakU_{i}},-)$ forms a functor mapping from the category of $\hat{\mO}_{\Xhbkad}^+$-local systems to that of generalized representations of the fundamental group of $\frakX_k$.\\[0mm]

Conversely, let 
\[\bV=((\bV_i,\rho_i),\beta_{ij})\]
be a generalized representation of the fundamental group of $\frakX_k$, where $(\bV_i,\rho_i)$ is an $\hbR_i$-module with a semilinear action of $\Delta_i$ and $\beta_{ij}$'s are the compatible gluing maps. Then by gluing the $\hat\mO_{\Xhbkad}^+(\frakU_i)$-local system associated to $(\bV_i,\rho_i)$, one gets an $\hat{\mO}_{\Xhbkad}^+$-local system. In conclusion, one gets the following result.
\begin{lem}\label{another description}
There is a natural equivalence mapping from the category of $\hat{\mO}_{\Xhbkad}^+$-local systems to that of the generalized representations of the fundamental group of $\frakX_k$.\\[2mm]
In the rest of this note, we will always identify these two category via this equivalent functor.
\end{lem}

Let $(E,\theta)$ be a small small Higgs bundle and let 
\[\bV=\bD^{Fal}(E,\theta)\]
be the $\hat{\mO}_{\Xhbkad}^+$-local system under $p$-adic Simpson correspondence. Let $\{\frakU_i\}_i$ be a chosen small affine open covering of $\frakX$. Then one has following commutative diagram
\begin{equation}
\xymatrix{
	\bV(\overline{\frakU_{ij}}) \ar@{=}[d] \ar[r]^-{\cong} & E(\frakU_{i})\otimes_{\frakR_i}\hbR_{ij} \ar[d]^-{\beta_{\tau_i,\tau_j}}\\
	\bV(\overline{\frakU_{ij}}) \ar[r]^-{\cong} & E(\frakU_{j})\otimes_{\frakR_j}\hbR_{ij} \\
}
\end{equation}
where 
\[\begin{split}
\frakU_{ij} & =\frakU_i\cap\frakU_j,\\
\overline{\frakU_{ij}} & =\varprojlim\limits_{V/\frakU_{ij,\hbk}^{\mathrm{ad}}:\text{finite etale}} V\in \Xhbkadproet,\\
\hbR_{ij} & =\Gamma(\overline{\frakU_{ij}},\hat{\mO}_{\Xhbkad}^+)	
\end{split}\] 
and $\beta_{\tau_i,\tau_j}$ defined in \eqref{equ:taylorHiggs}.

\subsection{Functors constructed via p-adic period sheaves related to $p$-adic Simpson correspondence}

Here we will recall Scholze's Riemann-Hilbert correspondence, and some other intermediate functors to compare the modified functor in Theorem~\ref{thm:modifiedFunctor} with Scholze's Riemann-Hilbert correspondence.\\

Keep the notation as in Section~\ref{section_discrip_via_Sheaves}. 
Recall form \cite[Definition~6.8]{Sch13} the de Rham period sheaf $\OBdR_{,\Xkad}$ on $\Xkadproet$. It is a sheaf of $\mO_{\Xkad}$-algebras which admits a decreasing filtration $\Fil^\bullet$ and an integrable connection 
\[\nabla\colon \OBdR_{,\Xkad}\rightarrow \OBdR_{,\Xkad}\otimes \Omega_{\frakX_k}^1.\]
Taking the $0$-th grading piece, one gets a period sheaf 
\[\OC_{\Xkad} := \Gr^0\OBdR_{,\Xkad}.\]
The associated graded connection
\begin{equation}\label{equ:gradHiggs}
	\Gr(\nabla)\colon \OC_{\Xkad}\rightarrow \OC_{\Xkad}\otimes\Omega_\frakX^1(-1).	
\end{equation}
forms a Higgs field over $\OC_{\Xkad}$ with value in the Tate twisting $\Omega_\frakX^1(-1)$.

\para{Scholze's Riemann-Hilbert correspondence.} In~\cite[Theorem 7.6]{Sch13}, Scholze constructed a pair of functors via the period ring $\OBdR_{,\Xkad}$, which will be denoted by $\mM^{Sch}$ and $\RH^{Sch}$. This two functors $\mM^{Sch}, \RH^{Sch}$ are between the category of filtered $\mO_{\Xkadet}$-modules with integrable connections and the category of $\bBdRp_{,\Xkad}$-local systems.
\begin{equation*}
\xymatrix@C=1cm{
\left\{\txt{filtered $\mO_{\Xkadet}$-modules with \\ integrable connections}\right\} \ar@/^2pt/[r]^-{\mM^{Sch}} & \left\{\txt{$\bBdRp_{,\Xkad}$-local systems}\right\} \ar@/^2pt/[l]^-{\RH^{Sch}} \\ 
}
\end{equation*}
For any filtered $\mO_{\Xkadet}$-module $\mE$ with integrable connection $\nabla$, denote by
\[ \mM^{Sch}(\mE) := \Fil^0(\nu^*\mE\otimes \OBdR_{,\Xkad})^{\nabla=0},\]
and for any $\bBdRp_{,\Xkad}$-local system $\bM$ denote by
\[\RH^{Sch}(\bM):=\nu_*(\bM\otimes \OBdR_{,\Xkad}).\]

\begin{thm}[{\cite[Theorem 7.6]{Sch13}}] The functor $\mM^{Sch}$ is fully faithful and $\RH^{Sch}$ is a left quasi-inverse of $\mM^{Sch}$.	
\end{thm}

\begin{rmk}
In general the functor $\RH^{Sch}$ is not a right quasi-inverse of the functor $\mM^{Sch}$. But there is a natural embedding of $\bBdRp_{,\Xkad}$-local systems
\[\mM^{Sch}\circ\RH^{Sch}(\bM)\hookrightarrow \bM\]
for any $\bBdRp_{,\Xkad}$-local system $\bM$, which is induced by a natural injection\add{why?} 
\[RH^{Sch}(\bM) \otimes \OBdR_{,\Xkad} \hookrightarrow \bM\otimes\OBdR_{,\Xkad}.\]	
\end{rmk}

\para{Liu and Zhu's Riemann-Hilbert correspondence.} Recall the diagram~\eqref{diag_cano_map}, according to the construction~\cite[Defintion~6.8]{Sch13}, one has 
\[\OBdR_{,\Xhbkad} = \OBdR_{,\Xkad} \mid_{\Xhbkadproet}.\] 
Recall from~\cite[Lemma~3.7]{LZ}, one has 
\[\nu'_*(\OBdR_{,\Xhbkad}) = \mO_{\Xhbkadet}\hat{\otimes}\BdR.\]
We note that $\mO_{\Xhbkadet}\hat{\otimes}\BdR$ is the structure sheaf of the ringed spaces 
\[\mX=(\Xhbkad, \mO_{\Xhbkadet}\hat\otimes \BdR)\]
in Liu and Zhu's paper, for more details see~\cite[Section 3.1]{LZ}. We recall a geometric version of the $p$-adic Riemann-Hilbert correspondence in~\cite[Theorem~3.8]{LZ}. There is a tensor functor $\RH$ mapping from the category of \'etale $\bQ_p$-local systems to the category of vector bundles on the ringed space $\mX$ equipped with a semilinear action of the absolute Galois group and with a filtration and integrable connection satisfying Griffiths' transversality. The following two functors $\mM^{LZ}$ and $\RH^{LZ}$ follow Liu and Zhu's original functor $\RH$. They are between the category of filtered $\mO_{\Xhbkadet}\hat{\otimes}\BdR$-modules with integrable connections and the category of $\bBdRp_{,\Xhbkad}$-local systems
	\begin{equation*}
	\xymatrix@C=1cm{
		\left\{\txt{filtered $\mO_{\Xhbkadet}\hat{\otimes}\BdR$-modules \\ with integrable connection}\right\} \ar@/^2pt/[r]^-{\mM^{LZ}} & \left\{\txt{$\bBdRp_{,\Xhbkad}$-local system}\right\} \ar@/^2pt/[l]^-{\RH^{LZ}} \\ 
	}
	\end{equation*} 
For any filtered $\mO_{\Xhbkadet}\hat{\otimes}\BdR$-module $\mE$ with integrable connection $\nabla$, denote
	\[ \mM^{LZ}(\mE) := \Fil^0(\nu'^*\mE\otimes \OBdR_{,\Xhbkad})^{\nabla=0},\]
	and for any $\bBdRp_{,\Xkad}$-local system $\bM$ denote 
	\[\RH^{LZ}(\bM):=\nu'_*(\bM\otimes \OBdR_{,\Xhbkad}).\]

\begin{rmk} The restriction of $\RH^{LZ}$ on $\bQ_p$-local systems is Liu and Zhu's geometric version of the $p$-adic Riemann-Hilbert correspondence $\RH$. In particular,~\cite[Theorem 3.8(iii)]{LZ} implies that $\RH^{LZ}$ preserves the ranks for $\bQ_p$-local systems on $\Xkadet$.
\end{rmk}

\begin{remark}
The pairs $(\mM^{Sch},\RH^{Sch})$ and $(\mM^{LZ},\RH^{LZ})$ are related as follows. For a filtered $\mO_{\Xkadet}$-module $\mE$ with integrable connection, by restricting on $\Xhbkadet$ and tensoring $\mO_{\Xkadet}\hat{\otimes}\BdR$, one gets a filtered $\mO_{\Xkadet}\hat{\otimes}\BdR$-module $\mE\mid_{\Xhbkadet} \hat\otimes \BdR$ with integrable connection. Since the functor $(\cdot)^{\nabla=0}$ of taking the flat sections and the functor $\Fil^0(\cdot)$ of taking $0$-th filtration commute with the restriction functor, one has the following equality between $\bBdRp_{,\Xhbkad}$-local systems
\[\mM^{LZ}\left(\mE\mid_{\Xhbkadet} \hat\otimes \BdR \right) = \mM^{Sch}(\mE)\mid_{\Xhbkadproet}.\]
Conversely, for any $\bBdRp_{,\Xkad}$-local system $\bM$, there is an injective map between filtered $\mO_{\Xkadet}\hat{\otimes}\BdR$-modules 
\[\RH^{Sch}(\bM)\mid_{\Xhbkadet} \hat\otimes \BdR \hookrightarrow \RH^{LZ}(\bM\mid_{\Xhbkadproet}).\]
\end{remark}
\ \\
Taking the $0$-th grading pieces, one gets 
\[\OC_{\Xhbkad} = \OC_{\Xkad} \mid_{\Xhbkadproet}.\]
Similarly, by using this sheaf, one constructs two functors $\bD^{\OC}, \mH^{\OC}$ between the category of Higgs bundle $(E,\theta)$ over $\Xhbkadet$ and the category of generalized $\bQ_p$-representations(i.e. the $\hat{\mO}_{\Xhbkad}$-local systems).
\begin{equation*}
\xymatrix@C=2cm{
\left\{\txt{Higgs bundles \\ over $\Xhbkadet$}\right\} \ar@/^2pt/[r]^-{\bD^{\OC}} & \left\{\txt{generalized $\bQ_p$-representations}\right\} \ar@/^2pt/[l]^-{\mH^{\OC}}\\ 
}
\end{equation*} 

For any Higgs bundle $(E,\theta)$ over $\Xhbkadet$, denote
\begin{equation} \label{deffunctor3-1}
	\bD^{\OC}(E,\theta) := (\nu'^*E\otimes \OC_{\Xhbkad})^{\theta=0},
\end{equation}
and conversely for any $\hat{\mO}_{\Xhbkad}$-local system $\bV$ denote 
\begin{equation} \label{deffunctor3-2}
	\mH^{\OC}(\bV):=\nu'_*(\bV\otimes \OC_{\Xhbkad}).
\end{equation}

\begin{remark}
	If $\bV$ is the generalized $\bQ_p$-representation attached to a $\bQ_p$-local system $\bL$ on $\frakX_k$ then 
	\[\mH^{\OC}(\bV)=(\mH(\bL),\vartheta_\bL),\]
	where $(\mH(\bL),\vartheta_\bL)$ is the nilpotent Higgs bundle defined in~\cite[Theorem 2.1]{LZ}. This follows from the definition of $\mH$.
\end{remark}

\begin{remark} 
The pairs $(\mM^{LZ},\RH^{LZ})$ and $(\bD^{\OC},\mH^{\OC})$ are related as follows. Let $(\mV,\nabla,\Fil)$ be a filtered $\mO_{\Xkadet}\hat{\otimes}\BdR$-module $\mE$ with integrable Griffiths connection. The $0$-th grading piece $\Gr^0(\mV,\nabla,\Fil)$ forms a Higgs bundle over $\Xhbkadet$. Since 
\[\OC_{\Xhbkad}=\Gr^0(\OBdR_{,\Xhbkad}),\]
there is a nature embedding maps $\Fil^0\left((\mV,\nabla,\Fil) \otimes \OBdR\right) \rightarrow \Gr^0(\mV,\nabla,\Fil)\otimes \OC$ which induces a natural injective map
\[ \Gr^0 \circ \mM^{LZ} (\mV,\nabla,\Fil) \hookrightarrow \bD^{\OC} \circ \Gr^0 (\mV,\nabla,\Fil).\]
This injection is actually an isomorphism if $(\mV,\nabla,\Fil)$ comes from filtered $\mO_{\Xhbkadet}$-module with integrable Griffiths connection. This follows~\cite[Theorem 7.6]{Sch13}.
For any $\bBdRp_{,\Xhbkad}$-local system $\bM$, the $0$-th grading piece is just a generalized $\bQ_p$-representation. The natural map 
\[\Gr^0(\bM\otimes\OBdR) \rightarrow \Gr^0(\bM) \otimes \OC\]
induces
\[ \Gr^0 \circ \RH^{LZ} (\bM)\hookrightarrow \mH^\OC \circ \Gr^0 (\bM).\]
\end{remark}

\ \\

We recall from~\cite[corrollary 6.15]{Sch13} the local structure of $\OC_{\Xhbkad}$. For any open subset
\[\frakU =\Spec(\frakR)\subset \frakX,\]
recall the object 
\[\overline{U} = \varprojlim\limits_{V/\frakU_{\hbk}:\text{finite etale}} V \in \Xhbkadproet\]
defined in \eqref{universal-covering}. One has 
\[\hat{\mO}_{\Xhbkad}^+(\overline{U})=\hbR \text{ and } \hat{\mO}_{\Xhbkad}(\overline{U})=\hbR\otimes \bQ_p.\] 
Let 
\[\TT=\{T_1,\cdots,T_d\}\]
be some chosen local parameters of $\frakR$ and 
\[\tau\colon \frakR \rightarrow A_2(\frakR)[\pi]\]
be a chosen lift in \eqref{equ:lift_tau}. Denote
\begin{equation}\label{equ:u_i}
u_i =- T_i\otimes 1 + 1\otimes \tau(T_i)\in \OBinfRh.
\end{equation}
Similarly as in~\cite[corrollary 6.15]{Sch13}, one has an isomorphism of $\hbR$-algebras
\begin{equation}\label{equ:localseries}
\OC_{\Xhbkad}(\overline{U})=\OCRh\cong \hbRK[\frac{u_1}{\xi},\cdots,\frac{u_d}{\xi}].
\end{equation}
\ \\

In the following, we introduce another larger period sheaf of $\mO_{\Xkad}$-algebras $\tOC_{\Xhbkad}$ over $\Xhbkadproet$. Denote
\[\tOCRh:= \bigcup_{\alpha>\alpha_0} \hbR[[p^{\alpha}\frac{u_1}{\xi},\cdots,p^{\alpha}\frac{u_d}{\xi}]][\frac1p] \subset \hbRK[[\frac{u_1}{\xi},\cdots,\frac{u_d}{\xi}]].\]
where $\alpha_0$ is the nonnegative real number given in Definition~\ref{def:verysmall}. one easily checks that the ring $\tOCRh$ satisfies the following facts. 
\begin{lem}\label{lem2.21} Keep notation as above.
\begin{itemize}
\item[i).] Up to a canonical isomorphism $\tOCRh$ does not depend on the choice of $\tau$, and $\OCRh$ is a subring of $\tOCRh$;
\item[ii).] The action of $\Delta$ and the Higgs field $\Gr(\nabla)$ naturally extend to $\tOCRh$;
\item there exists an $\hbRK$-linear projection 
\[\pr \colon \tOCRh \twoheadrightarrow \hbRK\]
sending $\frac{u_i}{\xi}$'s to zero. 
\end{itemize}
\end{lem}

By gluing the local ring $\tOCRh$, one gets a period sheaf of $\mO_{\Xhbkad}$-algebras over $\Xhbkadproet$, we denote it by $\tOC_{\Xhbkad}$. Follows Lemma~\ref{lem2.21}, one gets following basic facts of $\tOC_{\Xhbkad}$.
\begin{lem} Keep notation as above.
\begin{itemize}
\item[i).] $\OC_{\Xhbkad}\subset \tOC_{\Xhbkad}$ is a subsheaf of algebras;
\item[ii).] The action of $\Delta$ and the Higgs field $\Gr(\nabla)$ naturally extend to $\tOC_{\Xhbkad}$.
\end{itemize}
\end{lem}

By using the sheaf $\tOC_{\Xhbkad}$, similar constructions as in~\eqref{deffunctor3-1} and \eqref{deffunctor3-2}, one constructs following two functors between the category of Higgs bundle $(E,\theta)$ over $\Xhbkadet$ and the category of generalized $\bQ_p$-representations(i.e. the $\hat{\mO}_{\Xhbkad}$-local systems).
	\begin{equation*}
	\xymatrix@C=2cm{
		\left\{\txt{Higgs bundles \\ over $\Xhbkadet$}\right\} \ar@/^2pt/[r]^-{\bD^{\tOC}} & \left\{\txt{generalized $\bQ_p$-representations}\right\} \ar@/^2pt/[l]^-{\mH^{\tOC}} \\ 
	}
	\end{equation*} 

\begin{remark} The nature embedding $\OC_{\Xhbkad}\subset \tOC_{\Xhbkad}$ induces $\bD^{\OC}(E,\theta) \subseteq \bD^{\tOC}(E,\theta)$ for any Higgs bundle $(E,\theta)$ over $\Xhbkadet$ and $\mH^{\OC}(\bV) \subseteq \mH^{\tOC}(\bV)$ for any generalized $\bQ_p$-representations $\bV$.
\end{remark}

Let us sum up notation in this section in the following diagram. 
\begin{equation*}
\xymatrix{
\left\{\txt{filtered $\mO_{\Xkadet}$-module with \\ integrable connection}\right\} \ar@/^2pt/[r]^-{\mM^{Sch}} \ar[d]^{-\hat{\otimes}\BdR} & \left\{\txt{$\bBdRp_{,\Xkad}$-local system}\right\} \ar@/^2pt/[l]^-{\RH^{Sch}} \ar[d]^{\text{restriction}}
\\ 
\left\{\txt{filtered $\mO_{\Xhbkadet}\hat{\otimes}\BdR$-modules \\ with integrable connection}\right\} \ar@/^2pt/[r]^-{\mM^{LZ}} \ar[d]^{\Gr^0} & \left\{\txt{$\bBdRp_{,\Xhbkad}$-local system}\right\} \ar@/^2pt/[l]^-{\RH^{LZ}} \ar[d]^{\Gr^0} 
\\ 	
 \left\{\txt{Higgs bundles \\ over $\Xhbkadet$}\right\} \ar@/^2pt/[r]^-{\bD^{\OC},\bD^{\tOC}} & \left\{\txt{generalized $\bQ_p$-representations}\right\} \ar@/^2pt/[l]^-{\mH^{\OC},\mH^{\tOC}} 
\\ 
}
\end{equation*}
Let us recall from Definition~\ref{def:FalFun} that the two functors $\bD^{Fal}$ and $\mH^{Fal}$ are defined between the following categories
\begin{equation*}
		\xymatrix@C=1cm{
			\left\{\txt{very small Higgs \\ bundles over $\Xhbkadet$}\right\} \ar@/^2pt/[r]^-{\bD^{Fal}} & \left\{\txt{very small generalized\\ $\bQ_p$-representations}\right\} \ar@/^2pt/[l]^-{\mH^{Fal}} \\ 
		}
\end{equation*}

\section{Comparisons}

Here we study some relations between functors $\bD^{Fal}$, $\mM^{Sch}$, $\bD^{\OC}$, $\bD^{\tOC}$, $\mH^{Fal}$, $\RH^{Sch}$, $\mH^{\OC}$ and $\mH^{\tOC}$ given in section~\ref{section_preliminaries}.

For very small generalized representations, there are natural transformations
\[\mH^{\OC} \rightarrow \mH^{\tOC} \rightarrow \mH^{Fal}.\]
We summon up some the relations between $\mH^{\OC}$, $\mH^{\tOC}$ and $\mH^{Fal}$ in the following result.
\begin{prop}\label{mainthm1} Let $\bV$ be a generalized $\bQ_p$-representation. Then
\begin{itemize}
	\item[i).] the inclusion $\OC_{\Xkad}\hookrightarrow\tOC_{\Xhbkad}$ induces a natural injective map 
	\[\mH^{\OC}(\bV) \subset\mH^{\tOC}(\bV)\]
	and $\mH^{\OC}(\bV)$ is the maximal nilpotent sub-Higgs bundle of $\mH^{\tOC}(\bV)$;
	\item[ii).] Suppose moreover that $\bV$ is very small. Then there is a natural isomorphism
	\[\mH^{\tOC}(\bV) \rightarrow \mH^{Fal}(\bV).\] 
\end{itemize} 
\end{prop}

\begin{cor}\label{Cor:LiuZhu} \footnote{There is an overlap between Corollary 3.2 and Section-7 in a recent preprint by Yupeng Wang.
Our note was submitted in arXiv (arXiv:2012.02058v1) on December 3, 2020. Slightly late, Yupeng Wang submitted his paper entitled ``A $p$-adic Simpson correspondence for rigid analytic varieties" in arXiv (arXiv:2101.00263v1) on January 1, 2021. There he also proved the result in Corollary~\ref{Cor:LiuZhu}.} 
For a $\bQ_p$-local system $\bL$ on $\frakX_k$, Liu and Zhu's $p$-adic Simpson correspondence~\cite[Theorem 2.1]{LZ} coincide with modified Faltings' $p$-adic Simpson correspondence.
\end{cor}
\begin{proof} Since both 
	\[\mH^{\OC}(\bV)=(\mH(\bL),\vartheta_\bL)\]
	 and $\mH^{Fal}(\bV)$ have the same rank with $\bL$, Proposition~\ref{mainthm1} implies they coincide with each other.
\end{proof}

\begin{proof}[Proof of Proposition~\ref{mainthm1}] The inclusion $\mH^{\OC}(\bV) \subset\mH^{\tOC}(\bV)$ follows from the definitions. Firstly, we show the nilpotency of $\mH^{\OC}(\bV)$. Since a Higgs bundle being nilpotent is a local property, it is sufficient to show the local case. We may assume that $\bV$ is a finite $\hbRK$-module with semilinear action of 
	\[\Delta=\Gal(\hbR/\hRbarV),\]
where 
 \[\frakU=\Spec(\frakR)\]
is a small affine open subset of $\frakX$. Recall~\eqref{equ:localseries} and \eqref{equ:gradHiggs}, one has 
\[\OCRh\cong \hbRK[\frac{u_1}{\xi},\cdots,\frac{u_d}{\xi}]\]
with $\hbRK$-linear Higgs field 
	\[\Gr(\nabla)\colon \OCRh\rightarrow \OCRh\otimes \Omega^1_\frakU(-1)\]
on $\OCRh$ given by 
	\[\Gr(\nabla)\left( \left(\frac{u_1}{\xi}\right)^{\alpha_1} \cdots \left(\frac{u_d}{\xi}\right)^{\alpha_d}\right) = \sum_i -\alpha_i\left( \left(\frac{u_1}{\xi}\right)^{\alpha_1} \cdots \left(\frac{u_i}{\xi}\right)^{\alpha_i-1} \cdots \left(\frac{u_d}{\xi}\right)^{\alpha_d}\right) \otimes \mathrm{d}T_i\cdot \xi^{-1}.\]
In particular,
\begin{equation}\label{partialHiggs}
	\Gr(\nabla)(\xi\partial_i) \left( \left(\frac{u_1}{\xi}\right)^{\alpha_1} \cdots \left(\frac{u_d}{\xi}\right)^{\alpha_d}\right) = -\alpha_i \left(\frac{u_1}{\xi}\right)^{\alpha_1} \cdots \left(\frac{u_i}{\xi}\right)^{\alpha_i-1} \cdots \left(\frac{u_d}{\xi}\right)^{\alpha_d}.
\end{equation} 
On one hand, the underlying module of $\mH^{\OC}(\bV)$ is defined to be the $\Delta$-invariant part of $\bV\otimes\OCRh$, for any element $e\in\mH^{\OC}(\bV)$ it can be written in the form
	\[e = \sum_{|I|<N} v_I\otimes \left(\frac{u}{\xi}\right)^{I},\]
where $I=(i_1,\cdots,i_d)\in\bN^d$ with 
\[|I|:=i_1+\cdots+i_d,\]
$v_I$ is some element in $\bV$, $N$ is some positive integer, and 
\[\left(\frac{u}{\xi}\right)^{I}:=\prod_\ell \left(\frac{u_\ell}{\xi}\right)^{i_\ell}.\]
On the other hand, the Higgs field $\theta^{\OC}$ on $\mH^{\OC}$ is induced by the grading $\Gr(\nabla)$ of the connection $\nabla$, one has 
\[\theta^{\OC}(e) := \sum_{|I|<N} v_I\otimes \Gr(\nabla) \left(\frac{u}{\xi}\right)^{I}.\]
Thus \eqref{partialHiggs} implies 
	\[\theta^{\OC}((\xi\partial)^J)(e)=0, \quad \text{for any $J$ with } |J|\geq N.\]
Hence $\theta^{\OC}$ is nilpotent.\\[0mm]
	
Secondly, we show that $\mH^{\OC}(\bV)$ is the maximal nilpotent part of $\mH^{\tOC}(\bV)$. We only need to show that any element $\eta\in \mH^{\tOC}(\bV)$ which is nilpotent under the Higgs field $\theta^{\tOC}$ is contained in $\mH^{\OC}(\bV)$. We write it as form 
	 \[\eta = \sum_{I\in\bN^d} w_I \otimes \left(\frac{u}{\xi}\right)^{I}.\]
Since $\eta$ is nilpotent element, there exists some $N$ such that for any $J\in \bN^d$ with $|J|\geq N$, one has $\theta^{\tOC}((\xi\partial)^J)(\eta)=0$, where
\[\theta^{\tOC}((\xi\partial)^J)(\eta) := \sum_{I\in\bN^d} w_I \otimes \Gr(\nabla)((\xi\partial)^J) \left(\frac{u}{\xi}\right)^{I}.\]
By~\eqref{partialHiggs}, for any $I$ with $I\geq N$, 
\[w_I=0.\]
There are only finitely many nonzero terms in the series, which means that $\eta$ is also contained in $\mH^{\OC}(\bV)$.\\[0mm] 
	
Finally, We construct the natural isomorphism 
\[\mH^{\tOC}(\bV) \rightarrow \mH^{Fal}(\bV),\]
for a very small generalized $\bQ_p$-representation $\bV$. Let us denote $\theta$ the Higgs field in the Higgs bundle $\mH^{Fal}(\bV)$. Because $\bV$ is very small, according the explicit construction~\eqref{underlyingMod} and \eqref{equ:galoisAction} of the equivalent functor in Theorem~\ref{thm:modifiedFunctor}, over a small affine open subset 
\[\frakU=\Spec(\frakR),\]
one has 
\[\bV(\overline{\frakU})=\mH^{Fal}(\bV)(\frakU)\otimes \hbRK.\]
Extending the action $\rho$ of $\Delta$ on $\bV(\overline{\frakU})$ semilinearly onto 
\[\bV(\overline{\frakU})\otimes\tOCRh = \mH^{Fal}(\bV)(\frakU)\otimes \tOCRh.\]
That is for any 
\[e\otimes r\in\mH^{Fal}(\bV)(\frakU)\otimes\tOCRh\]
one has 
\[ \rho(\sigma)(e\otimes r) = \exp\left[\sum_{i=1}^{d} \theta(\xi\partial_i)\otimes\left(\left(\frac{u_i}{\xi}\right)^{\sigma} - \left(\frac{u_i}{\xi}\right)\right)\right](e\otimes \sigma(r)).\]
In particular, one gets a $\Delta$-invariant element
	\[\hat{e}:= \exp\left[-\sum_{i=1}^{d} \theta(\xi\partial_i)\otimes\frac{u_i}{\xi}\right](e\otimes1)\in \mH^{Fal}(\bV)(\frakU)\otimes \widetilde{\tOCRh}.\]
This is because 
\[\rho(\sigma)(\hat{e}) = \exp\left[-\sum_{i=1}^{d} \theta(\xi\partial_i)\otimes\left(\frac{u_i}{\xi}\right)^\sigma\right] \Big( \rho(\sigma)(e\otimes1)\Big)=\hat{e}\]
for any $\sigma\in\Delta$.
In other words, one has $\hat{e}\in \mH^{\tOC}(\bV)$. The map $\hat{(\cdot)}$ is clearly injective and $\hRbark$-linear mapping from $\mH^{Fal}(\bV)(\frakU)$ to $\mH^{\tOC}(\bV)(\frakU)$. We only need to show it is also surjective. that is that every element $\eta\in \mH^{\tOC}(\bV)$ is of the form
	\[\hat{e}:= \exp\left[-\sum_{i=1}^{d} \theta(\xi\partial_i)\otimes\frac{u_i}{\xi}\right](e\otimes1),\]
for some $e\in \mH^{Fal}(\bV)$. Let $\{e_i\}$ be a basis of $\mH^{Fal}(\bV)$. Then $\{\hat{e_i}\}$ is also a basis of $\mH^{Fal}(\bV)\otimes\tOCRh$. We may write 
\[\eta=\sum_i \hat{e_i} \cdot \eta_i\]
with $\eta_i\in \tOCRh$. Since $\eta$ is $\Delta$ invariant, one obtains that $\eta_i^\sigma=\eta_i$ for all $\sigma\in \Delta$ and all $i$. Thus 
\[\eta_i\in \tOCRh^\Delta = \hRbark.\]
 So the element $e:=\sum_i e_i\cdot\eta_i$ is contained in $\mH^{Fal}(\bV)$ satisfies 
 \[\widehat{e}=\sum_i \hat{e_i}\cdot\eta_i=\eta.\qedhere\]
\end{proof}

Next, we show that the two functors $\bD^{\tOC}$ and $\bD^{Fal}$ coincide with each other for the small Higgs bundles. 
\begin{thm}\label{thm:main3}
	Let $(E,\theta)$ be a very small Higgs bundle. Then we have an isomorphism between generalized $\bQ_p$-representations.
	\[\bD^{\tOC}(E,\theta) \cong \bD^{Fal}(E,\theta).\]
\end{thm}

Before giving the proof, we need to introduce a projection map $\mathrm{pr}_\tau$. Let $\frakR$ be a small smooth $W$-algebra of relative dimension $d$. Choose some local parameters 
\[\TT=\{T_1,\cdots,T_d\}\]
of $\frakR$ and a lift 
\[\tau\colon \frakR \rightarrow A_2(\frakR)[\pi]\]
as in \eqref{equ:lift_tau}. Recall the definition 
\[\tOCRh:= \bigcup_{\alpha>\alpha_0} \hbR[[p^{\alpha}\frac{u_1}{\xi},\cdots,p^{\alpha}\frac{u_d}{\xi}]][\frac1p] \subset \hbRK[[\frac{u_1}{\xi},\cdots,\frac{u_d}{\xi}]].\]
There exists an $\hbRK$-linear projection 
\[\tOCRh \twoheadrightarrow \hbRK\]
sending $\frac{u_i}{\xi}$'s to zero. 
For any Higgs bundle $(E,\theta)$ over $\hRbark$ with the Higgs field $\theta$ contained in $\End(E)\otimes\Omega^1_{\frakR/V}\cdot\xi^{-1}$, there exists an $\hbRK$-linear morphism
\begin{equation} \label{proj: Higgs}
\pr_{\tau} \colon E\otimes \tOCRh \rightarrow E\otimes \hbRK.
\end{equation}
We note that this morphism depends on $\tau$ but not on $\TT$. We show that its restriction on the submodule which killed by $\theta$ is an isomorphism. 
\begin{lem}\label{lem: main-02} Fix a lift $\tau\colon \frakR \rightarrow A_2(\frakR)[\pi]$. Let $(E,\theta)$ be a very small Higgs bundle over $\hRbark$. Then
\begin{itemize}
\item[i).] the submodule $\left(E\otimes \tOCRh\right)^{\theta=0}$ has an $\hbRK$-module structure and it contains elements of form
	\[\widetilde{e}=\sum_{I} \frac{\theta^I(\xi\partial)(e)}{I!} \otimes \left(\frac{u}{\xi}\right)^I.\]
\item[ii).] The restriction of the map in \eqref{proj: Higgs} on the submodule in $\mathrm{(i)}$
	\[\pr_{\tau} \colon \left(E\otimes_{\frakR} \tOCRh\right)^{\theta=0} \longrightarrow E\otimes_{\frakR} \hbRK\]
	is an $\hbRK$-linear isomorphism.
\item[iii).] If $\tau'$ is another lift, then there is a commutative diagram:
	\begin{equation}
		\xymatrix{
			\left(E\otimes_{\frakR} \tOCRh\right)^{\theta=0} \ar[rr]^-{\pr_{\tau}}_-{\cong} \ar@{=}[d]
			&&
			E\otimes_{\frakR}\hbRK \ar[d]^-{\beta_{\tau,\tau'}}_-{\cong}\\
			\left(E\otimes_{\frakR} \tOCRh\right)^{\theta=0} \ar[rr]^-{\pr_{\tau'}}_-{\cong} &&
			E\otimes_{\frakR} \hbRK\\
		}
	\end{equation}
\end{itemize}
\end{lem}

\begin{proof} (i) Since 
	\[\tOCRh^{\theta=0} = \hbRK,\]
	one obtains that $\left(E\otimes_{\frakR} \tOCRh\right)^{\theta=0}$ is an $\hbRK$-module. Note that 
	\[\theta\Big(\left(\frac{u}{\xi}\right)^I\Big) = -\sum_{\ell=1}^{d} i_\ell \left(\frac{u}{\xi}\right)^{I-I_\ell}\cdot \frac{\mathrm{d}T_\ell}{\xi}\]
	and 
	\[\theta (\theta^I(\xi\partial)(e)) = \sum_{\ell=1}^{d}\theta^{I+I_\ell}(\xi\partial)(e) \cdot \frac{\mathrm{d} T_\ell}{\xi},\]
	where $I=(i_1,\cdots,i_d)$ and $I_\ell=(0,\cdots,\underset{\ell\text{-th}}{1},\cdots,0)\in \bN^d$.
	Thus 
	\[\theta(\widetilde{e})=0.\]

(ii) Firstly, the map $\pr_\tau$ is surjective. For any $e \in V$, 
\[\theta\left(\tilde{e}\right)=0 \quad \text{ and } \quad \pr_\tau\left(\tilde{e}\right) = e\otimes 1,\]
Since $\pr$ is $\hbR$-linear and $E(\frakU)\otimes \hbR$ is generated by the elements of form $e\otimes1$, thus $\pr$ is surjective.

Secondly, the map $\pr_i$ is injective. By~\eqref{equ:localseries}, any element in $E\otimes \tOCRh$ can be written uniquely as form $\sum\limits_{I} \frac{e_{I}}{I!} \cdot \left(\frac{u}{\xi}\right)^I$ with $e_I\in V \otimes \RK$. Suppose that
\[\theta\left(\sum_{I} \frac{e_{I}}{I!} \cdot \left(\frac{u}{\xi}\right)^I\right)=0\quad \text{ and } \quad \pr\left(\sum_{I} \frac{e_{I}}{I!} \cdot \left(\frac{u}{\xi}\right)^I\right) = 0.\]
On one hand, the first equality implies that
\[e_{I+I_k} = \theta(\xi\partial_k)(e_I),\]
for all $I\in \bN^d$ and $k\in\{1,2\cdots,d\}$.
on the other hand, the second equality implies that $e_0$ equals zero. Thus 
\[e_I=0\]
for all $I$. So $\pr$ is injective.

(iii) Since 
\[\frac{u}{\xi}:=\frac{-T\otimes1+\tau(T)}{\xi} =\frac{-T\otimes1+\tau'(T)}{\xi} + \frac{\tau(T)-\tau'(T)}{\xi},\]
 one has 
 \[\pr_{\tau'}(\frac{u}{\xi}) = \frac{\tau(T)-\tau'(T)}{\xi}.\]
 Thus
\begin{equation*}
\begin{split}
\pr_{\tau'}(\widetilde{e}) & = \sum_{I} \frac{\theta^I(\xi\partial)(e)}{I!} \otimes \left(\frac{\tau(T) -\tau'(T)}{\xi}\right)^I \\
& = \beta_{\tau,\tau'}(e\otimes1) = \beta_{\tau,\tau'} \circ \pr_{\TT}(\widetilde{e})
\end{split}
\end{equation*}
Since elements of form $\widetilde{e}$ generates the whole submodule $\left(E\otimes_{\frakR} \tOCRh\right)^{\theta=0}$ as an $\hbRK$-module, the diagram commutes.
\end{proof}

\begin{cor} If the very small Higgs bundle $(E,\theta)$ is nilpotent. Then
	\begin{itemize}
		\item[(i)] the submodule $\left(E\otimes \OCRh\right)^{\theta=0} = \left(E\otimes \tOCRh\right)^{\theta=0}$
		\item[(ii)] The restriction of the map 
		\[\pr_{\tau} \colon \left(E\otimes_{\frakR} \OCRh\right)^{\theta=0} \longrightarrow E\otimes_{\frakR} \hbRK\]
		is an $\hbRK$-linear isomorphism.
	\end{itemize}
\end{cor}

\begin{proof}[Proof of Theorem~\ref{thm:main3}]
We firstly need to find a morphism mapping from 
\[\bD^{\tOC}(E,\theta):=(E\otimes_{\OfrakX} \tOC)^{\theta=0}\]
to $\bD^{Fal}(E,\theta)$. It is sufficient to construct compatible local $\Delta$-isomorphisms
\[\left(E(\frakU)\otimes_{\frakR} \OCRh\right)^{\theta=0} \longrightarrow E(\frakU)\otimes_{\frakR} \hbRK\]
for each small open affine subset 
\[\frakU=\Spec(\frakR)\subset\frakX.\]
This is because $\bD^{Fal}(E,\theta)$ is glued from the local representations $E(\frakU_i)\otimes_{\frakR_i} \hbR_i[\frac1p]$ via Taylor formula $\beta_{\tau_i,\tau_j}$~\eqref{equ:taylorHiggs}.

Fix a local lift $\tau$ on each $\frakU$. By Lemma~\ref{lem: main-02}, there does exist an $\hbRK$-linear isomorphism
\[\pr_{\tau}\colon \left(E(\frakU)\otimes_{\frakR} \OCRh\right)^{\theta=0} \longrightarrow E(\frakU)\otimes_{\frakR} \hbRK\]
for each $\frakU$, and these isomorphisms can be glued into a global isomorphism
\[\pr \colon \left(E\otimes \OC\right)^{\theta=0} \rightarrow \bD^{Fal}(E,\theta).\]

In the following, we only need to show that $\pr_{\tau}$ preserves the $\Delta$-actions on both sides. This is equivalent to checking that the following diagram commutes for all $\sigma\in \Delta$
\begin{equation*}
\xymatrix{
\left(E(\frakU)\otimes_{\RK} \OCRh\right)^{\theta=0} \ar[r]^-{\pr_{\tau}} \ar[d]^-{\sigma}
& E(\frakU) \otimes_{\RK} \hbRK \ar[d]^-{\sigma} \\
\left(E(\frakU)\otimes_{\RK} \OCRh\right)^{\theta=0} \ar[r]^-{\pr_{\tau}}
& E(\frakU)\otimes_{\RK} \hbRK \\
}
\end{equation*}
By lemma~\ref{lem: main-02}, one has that $\left(E(\frakU) \otimes_{\RK} \OCRh\right)^{\theta=0}$ is generated by the elements of form
\[\widetilde{e}=\sum_{I} \frac{\theta^I(\xi\partial)(e)}{I!} \otimes \left(\frac{u}{\xi}\right)^I.\]
We only need to check that,for all $e\in E$
\[\pr_{\tau}\circ\sigma (\widetilde{e}) = \sigma\circ \pr_{\tau}(\widetilde{e}).\]
By the definition of $\pr_{\tau}$, one has 
\[\pr_{\tau}(\widetilde{e})=e\otimes1.\]
By equation~\eqref{equ:galoisAction}
\[\sigma\circ\pr_{\tau}(\widetilde{e}) = \sum_{I} \frac{\theta^I(\xi\partial)(e)}{I!} \left(\frac{\tau(T)^\sigma-\tau(T)}{\xi}\right)^I.\]
On the other hand, since 
\[\pr_\tau\circ\sigma(\frac{u}{\xi})=\frac{\tau(T)^\sigma-\tau(T)}{\xi},\]
one has
\[ \pr_\tau\circ\sigma \left(\sum_{I} \frac{\theta^I(\xi\partial)(e)}{I!} \otimes \left(\frac{u}{\xi}\right)^I \right) = \sum_{I} \frac{\theta^I(\xi\partial)(e)}{I!} \left(\frac{\tau(T)^\sigma-\tau(T)}{\xi}\right)^I. \]
This shows that the morphism preserves the $\Delta$-actions.
\end{proof}

By using Theorem~\ref{thm:main3}, we get our main result.
\begin{thm}\label{thm:main2}
Let $\mE$ be a filtered $\mO_{\Xkadet}$-modules with integrable Griffith connection. Denote \[\mM:=\mM^{Sch}(\mE)\mid_{\Xhbkadproet} \text{ and } (E,\theta):=\Gr^0(\mE\hat{\otimes}\BdR).\]
Then there is an isomorphism of generalized representations
	\[\mM/\xi\mM = \bD^{Fal}(E,\theta).\]
In other words, the following diagram commutes
\begin{equation*}
	\xymatrix{
		\left\{\txt{filtered $\mO_{\Xkadet}$-module with \\ integrable connection}\right\} \ar[d]^{-\hat{\otimes}\BdR} \ar[r]^-{\mM^{Sch}} & \left\{\txt{$\bBdRp_{,\Xkad}$-local system}\right\} \ar[d]^{\text{restriction}}
		\\ 
		\left\{\txt{filtered $\mO_{\Xhbkadet}\hat{\otimes}\BdR$-modules \\ with integrable connection}\right\} \ar[d]^{\Gr^0} & \left\{\txt{$\bBdRp_{,\Xhbkad}$-local system}\right\} \ar[d]^{\Gr^0} 
		\\ 	
		\left\{\txt{Higgs bundles \\ over $\Xhbkadet$}\right\} \ar@/^2pt/[r]^-{\bD^{Fal}} & \left\{\txt{generalized $\bQ_p$-representations}\right\} 
		\\ 
	}
\end{equation*}
\end{thm}

\begin{proof}
The surjective map $\OBdR\rightarrow\OC$ induces a nature injective map 
 \[\mM/\xi\mM \rightarrow \bD^{\OC}(E,\theta).\]
Consider the following maps
\[\mM/\xi\mM \hookrightarrow \bD^{\OC}(E,\theta) \rightarrow \bD^{\tOC}(E,\theta)\rightarrow \bD^{Fal}(E,\theta).\]
By Scholze's theorem the rank of $\mM/\xi\mM$ is the same as that of $\mE$. On the other hand, the rank of $\bD^{\OC}(E,\theta)$ is smaller than or is equal to that of $(E,\theta)$. Thus the map 
 \[\mM/\xi\mM \rightarrow \bD^{\OC}(E,\theta)\]
is actually an isomorphism. Since $(E,\theta)$ is graded, one has 
\[\bD^{\OC}(E,\theta)=\bD^{\tOC}(E,\theta).\]
Thus the Theorem follows from Theorem~\ref{thm:main3}.
\end{proof}

\begin{question} Suppose that $\bM$ is a $\bBdRp$-local system induced from a $\bZ_p$-local system over $\Xkadet$. Consider the sub-$\bBdRp$-local system (Remark~) 
	\[\bM':=\mM^{Sch}\circ\RH^{Sch}(\bM) \subset \bM.\] When is the generalized representation $\bM'/\xi\bM'$ a genuine representation?
\end{question}

\section{de Rham sub-local systems}\label{sec:subrep}

We give an application of Theorem~\ref{thm:main2} on the relation between the Scholze's functor $\RH^{Sch}$ and Faltings' functor $\mH^{Fal}$. This is an easiest case in testing Faltings' conjecture and partially answers the following question: how can we reconstruct a given $\bZ_p$-local system over $\frakX_k$~\addd{\cite[Definition 8.1]{Sch13}} from its corresponding de Rham bundle $\RH^{Sch}(\bL\otimes\bBdRp)$?\\[0mm]

For a $\bZ_p$-local system $\bL$ over $\frakX_k$, there are two ways to construct Higgs bundles with trivial Chern classes. On one hand, we may assume $\bL$ to be very small after taking an \'etale base change on $\frakX_k$ and obtain the Higgs bundle 
\[(E,\theta)_{\bL}:=\mH^{Fal}(\bL\otimes \hat{\mO}_{\Xhbkad}^+).\]
Note that $(E,\theta)_\bL$ has trivial Chern classes and is slope semistable with respect to any ample line bundle, since it corresponds to a genuine representation under Faltings' functor. On the other hand, Scholze defined a filtered $\mO_\Xkadet$-module with integrable connection 
\[(V,\nabla,\Fil)=\RH^{Sch}(\bM)\]
attached to $\bL$, where $\bM=\bL\otimes\bBdRp_{,\Xhbkad}$. Subsequently, we have an associated $\mO_\Xkadet$-Higgs bundle 
\[(E,\theta)_\bL^{\mO\bB_{\rm dR}} := \Gr_{\xi}\circ\RH^{Sch}(\bM).\]
where 
\[\Gr_\xi(V,\nabla,\Fil) := (\Gr_\Fil(V),\Gr(\nabla)\cdot\xi^{-1}).\]
Indeed $(E,\theta)_\bL^{\mO\bB_{\rm dR}}$ has trivial Chern classes as $\RH^{Sch}(\bL)$ is a filtered de Rham bundle defined over a field of characteristic zero and the eigenvalues of the residue of the connection vanish.
\begin{thm}\label{maithm:subRep} For a $\bZ_p$-local system $\bL$ over $\frakX_k$, the following statements hold.
\begin{itemize}
	\item[i).] There is an injective morphism between Higgs bundles
	\[(E,\theta)_\bL^{\mO\bB_{\rm dR}}\rightarrow (E,\theta)_\bL.\]
	Consequently, the Higgs bundle $(E,\theta)_\bL^{\mO\bB_{\rm dR}}$ is a semistable. We view $(E,\theta)_\bL^{\mO\bB_{\rm dR}}$ as a sub-Higgs bundle in $(E,\theta)_\bL$ via this injective morphism.
	\item[ii).] According Faltings' Simpson correspondence, the sub-Higgs bundle $(E,\theta)_\bL^{\mO\bB_{\rm dR}}$ corresponds to a sub-generalized representation. Then this generalized representation is actually a genuine sub-$\mO_{\hbk}$-representation, denoted by $\bW^{\rm geo}_{\Ohbk}$, of $\bL^{\rm geo}\otimes \mO_{\hbk}$.
\end{itemize}
\end{thm} 

Before we start the proof, we need two lemmas as follows. 
\begin{lem}\label{lem:subsheaf1}
	Let $\frakX$ be a projective curve over $\Ok$ with semistable reduction. Assume $\mF$ is a sub-vector bundle of 
	\[\mG=\mO_{\frakX_\kappa}^{\oplus s}\]
	of degree $0$ over the special fiber $\frakX_\kappa$. Denote $r=\rank(\mF)\leq s$. Then 
	\[\mF\cong \mO_{\frakX_\kappa}^{\oplus r}.\]
\end{lem}

\begin{proof} Let $e_1,\cdots,e_s$ be a basis of $H^0(\frakX,\mG)$ and let $Y_1,\cdots,Y_m$ be all irreducible components of $\frakX_\kappa$. denote $e_{ij}:= e_i\mid_{Y_j}$\\[-2mm]
	
	Firstly, we show $\mG\mid_{Y_i}$ is semistable, that is $\deg(\mE)\leq 0$, for any subsheaf $\deg(\mE)$ of $\mG\mid_{Y_i}$. Suppose that $\deg(\mE)>0$. Then we consider the subbundle $\mE'$ of $\mE$ with maximal slope (this slope is obviously bigger than $0$). Since 
	\[0 \neq \mE'\subset \mG\mid_{Y_i} = \bigoplus_{i=1}^{s} \mO_{Y_j} \cdot e_{ij},\]
	there exists a nonzero projection map 
	\[\pi_i \colon \mE' \rightarrow \mO_{Y_j}\cdot e_{ij}.\]
	Considering the following exact sequence
	\[0\rightarrow \ker{\pi_i} \longrightarrow \mE' \longrightarrow \mathrm{im}(\pi_i)\rightarrow 0.\]
	Since the slope of the image of $\pi_i$ is nonpositive and the slope of $\mE'$ is positive, the slope of $\ker(\pi_i)$ is bigger than that of $\mE'$. This contradicts the definition of $\mE'$. Thus $\deg(\mE)\leq 0$. \\[-2mm]
	
	Secondly, for all $i=1\cdots,m$ one has 
	\[\deg(\mF\mid_{Y_i})=0.\]
	This is because 
	\[0=\deg(\mF) = \sum_{i=1}^m \deg(\mF\mid_{Y_i})\]
	and $\deg(\mF\mid_{Y_i})\leq 0$ from the first step. \\[-2mm]
	
	Thirdly, every degree $0$ rank $r$ subbundle $\mE\subset\mG\mid_{Y_i}$ is isomorphic to $\mO_{Y_i}^{\oplus r}$. Choose a nonzero projection 
	\[\pi_j\colon \mE\rightarrow \mO_{Y_i}\cdot e_{ij}.\]
	Since $\mG\mid_{Y_i}$ is semistable of slope zero, one obtains that $\mE$ is also semistable of slope zero. Hence $\deg(\mathrm{im}(\pi_j))\geq0$. So it is surjective. By induction on the rank, we may assume \[\ker(\pi_j)\cong\mO_{Y_i}^{\oplus (r-1)}.\]
	Then the subsheaf 
	\[\ker(\pi_j)+\mO_{Y_i}\cdot e_{ij}\subset \mG\mid_{Y_i}\]
	is a direct summand of rank $r$ and the projection mapping from $\mE$ to this summand is an isomorphism.
	\\[-2mm]
	
	Finally, we show that 
	\[\mF\cong \mO_{\frakX_\kappa}^{\oplus r}.\]
	For any $P_{ij}\in Y_i\cap Y_j$. Consider the diagram
	\begin{equation*}
		\xymatrix{
			H^0(\frakX_\kappa,\mF) \ar@{^(->}[r] \ar[d]^{\rm res} & H^0(\frakX_\kappa,\mG)\ar[d]_{\cong}^{\rm res}\\
			H^0(Y_i,\mF\mid_{Y_i}) \ar@{^(->}[r] \ar[d]_{\cong}^{\rm res} & H^0(Y_i,\mG\mid_{Y_i}) \ar[d]_{\cong}^{\rm res} \\
			\mF\mid_{P_{ij}} \ar@{^(->}[r] & \mG\mid_{P_{ij}}\\
		}
	\end{equation*}
	Since 
	\[\mG=\mO_{\frakX_\kappa}^{\oplus s},\]
	the two vertical maps on the right hand side are isomorphisms. We identify these three vector spaces on the right hand side via these two isomorphism. Then $H^0(Y_i,\mF\mid_{Y_i})$ is a $r$-dimensional sub-vector spaces of $H^0(\frakX_\kappa,\mG)$, which does not depend on the choice of the index $i$. This subspace generates a subbundle $\mF'\subset \mG$, which is clearly isomorphic to $\mO_{\frakX_\kappa}^{\oplus r}$. By the construction of $\mF'$, one has 
	\[\mF'\mid_{Y_i}=\mF\mid_{Y_i}\]
	for each $i$. Thus $\mF'=\mF$.
\end{proof}

\begin{lem}\label{lem53}
	Let $\frakX$ be a projective curve over $\Ok$ with semistable reduction. Assume $\mF$ is a sub-vector bundle of 
	\[\mG=(\mO_{\frakX}/p^n)^{\oplus s}\]
	of degree $0$ over $\frakX\otimes_W W_n$. Denote $r=\rank(\mF)\leq s$. Then 
	\[\mF\cong(\mO_{\frakX}/p^n)^{\oplus r}.\]
\end{lem}
\begin{proof}
	By Lemma~\ref{lem:subsheaf1}, there exists a basis $e_1,\cdots,e_s$ of $\mG$ such that 
	\[\mF/p = \mF'/p\]
	where $\mF'$ is the direct summand of $\mG$ generated by $e_1,\cdots.e_r$. Consider the composition
	\[f\colon \mF \rightarrow \mG \twoheadrightarrow \mF'\cong(\mO_{\frakX}/p^n)^{\oplus r}.\]
	Since 
	\[\mF/p = \mF'/p,\]
	one obtains that 
	\[f\pmod{p} = \mathrm{id}.\]
	Hence $f$ is an isomorphism.
\end{proof}
\begin{proof}[Proof of Theorem~\ref{maithm:subRep}]
	For the first part, let us recall 
	\[\RH^{Sch}(\bM)=\nu_*(\bL\otimes_{\hat{\bZ}_p} \OBdR)\]
	is filtered de Rham $\mO_\Xkadet$-module over $\Xkadet$. It is algebraic according~\cite[Theorem 9.1]{Sch13} or K\"opf's thesis~\cite{Kopf}. Denote
	\[\bM^{\OBdR}:= \mM^{Sch}\circ \RH^{Sch}(\bM)\]
	which is naturally embedding into the $\bBdRp$-representation 
	\[\bM=\bL\otimes_{\hat{\bZ}_p} \bBdRp.\] 
	\add{This is because the functor is $\RH^{Sch}$ is left exact and the $\bBdRp$-local systems associated to filtered $\mO_{\Xkadet}$-modules with integrable connections are closed under subs and quotients. Denote by $K$ the kernel of $\bM^{\OBdR}\rightarrow\bM$. Then taking $\RH^{Sch}$, then $\RH^{Sch}(K)$ is the kernel of an identity map and hence $\RH^{Sch}(K)=0$. So $K=0$.}
	Taking the first grading piece, one has sub-generalized representation
	\begin{equation}\label{equ:subrep}
		\bM^{\OBdR}/\xi\bM^{\OBdR} \subset \bM/\xi\bM=\bL\otimes \hOX
	\end{equation} 
	By Theorem~\ref{thm:main2}, one has 
	\[\bM^{\OBdR}/\xi\bM^{\OBdR}=\bD^{Fal}((E,\theta)_\bL^{\OBdR})\]
	where $(E,\theta)_\bL^{\OBdR}:=\Gr\circ\RH^{Sch}(\bM)$. Hence 
	\[\mH^{Fal}(\bM^{\OBdR}/\xi\bM^{\OBdR}) = (E,\theta)_\bL^{\OBdR}.\]
	Recall $(E,\theta)_\bL:=\mH^{Fal}(\bL\otimes \hat{\mO}_{\Xkad})$. Taking the functor $\mH^{Fal}$ on~\eqref{equ:subrep}, one gets an injective morphism 
	\[(E,\theta)_\bL^{\OBdR} \rightarrow (E,\theta)_\bL.\]
	\\[3mm]
	For the second part, as notation in the proof of the first part, the generalized representation corresponding the sub-Higgs bundle $(E,\theta)_\bL^{\mO\bB_{\rm dR}}$ is $\bM^{\OBdR}/\xi\bM^{\OBdR}$. We nee to Show this generalized representation comes from a genuine $\hbk$-geometrical representation. By Lefschetz hyperplane section theorem, we assume that $\frakX_{\hbk}$ is a curves, and after a base change on $\frakX$ \'etale over $\frakX$, we assume the restriction $\bL^{\rm geo}$ of $\bL$ modulo $p^2$ on $\Xhbkadet$ is trivial. Since $(E,\theta)_\bL^{\OBdR}$ over $\frakX_{\hbk}$ is the graded Higgs bundle of the de Rham bundle
	$(\mE_{\text{\'et}},\nabla,\Fil)$ it is of degree $0$. Since
	$(E,\theta)_\bL^{\OBdR}\subset (E,\theta)_\bL$ and $(E,\theta)_\bL$ is of degree zero and semistable, one obtains that $(E,\theta)_\bL^{\OBdR}$ is also semistable. \\
	On the other hand, the small representation $\bL^{\rm geo}$ corresponds to an extension $(\mE,\theta)_\bL$ over $\mO_{\hbk}$ of $(E,\theta)_\bL$ under Faltings' integral Simpson correspondence.
	Furthermore, for any $n\in \bN$, we may take a morphism 
	\[\pi: Y\to \frakX\]
	\'etale over $\frakX$ to make $\pi^*\bL^{\rm geo}$ is trivial modulo $p^{n+1}$. We may extend $\pi$ to a morphism 
	\[\pi: \frakY\to \frakX\]
	by choosing a suitable semistable module $\frakY$ of $Y$. Fixing $A_2(\Ok)$-lift of $\frakX$ and $\frakY$ under Faltings' integral correspondence $\pi^*\bL^{\rm geo}$ corresponds to the twisted pulled $\pi^\circ(E,\theta)_{\bL^{\rm geo}}$, which is the trivial Higgs bundle modulo $p^{n}$. The extension 
	\[(E,\theta)_\bL^{\OBdR}\subset (E,\theta)_\bL\]
	is also small which corresponds to a small generalized subrepresentation $\bW\subset \bL^{\rm geo}$. The pullback \[\pi^*(\bW)\subset \pi^*\bL^{\rm geo}\]
	corresponds to the twisted pull-back 
	\[\pi^\circ(E,\theta)_\bL^{\OBdR}\subset \pi^\circ(E,\theta).\]
	Note that $\pi^\circ(E,\theta)$
	is the trivial Higgs bundle over $Y_{\mO_{\hbk}/p^{n}\mO_{\hbk}}$ and the degree of the sub Higgs bundle $\pi^\circ(E,\theta)_\bL^{\OBdR}$ over $Y_{\mO_{\hbk}/p^{n}\mO_{\hbk}}$ equals zero. Thus, by Lemma~\ref{lem53}, the sub-Higgs bundle $\pi^\circ(E,\theta)_\bL^{\OBdR}$ is a trivial of rank-$r$.
	This implies that $\pi^*\bW$ (mod $p^n$) is an $\mO_{\hbk}/p^s\mO_{\hbk}$-geometrical representation.
	Note that the category of $\mO_{\hbk}$-geometrical representations is a fully faithful subcategory of the category of generalized representations. Thus $\pi^*(\bW)$ descends back to the original $\bW$, whose modulo $p^s$ reduction is an $ \mO_{\hbk}/p^n\mO_{\hbk}$-geometrical representation.
	Taking inverse limits, one gets that $\bW$ is an actual $\mO_{\hbk}/p^n\mO_{\hbk}$-representation of the geometric \'etale fundamental group, see a similar argument in~\cite{DW} for descending representations arising from strongly semistable vector bundles.
\end{proof} 

\begin{corollary}
	If $\bL$ is geometrically absolutely irreducible and $(E,\theta)_{\bL}^{\mO\bB_{\rm dR}}$ is nontrivial,
	then $\bL$ is a de Rham representation.
\end{corollary}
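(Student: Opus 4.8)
The plan is to derive the corollary from Theorem~\ref{thm:subRep}(ii) by a rank count. Recall that, in the logarithmic framework of Diao--Lan--Liu--Zhu extending Scholze's, the coherent $\OXet$-module $(V,\nabla)^{\OBdR}=v_*(\OBdR\otimes\bL)$ always has rank at most $\operatorname{rank}\bL$, and $\bL$ is de Rham precisely when equality holds; so it suffices to prove $\operatorname{rank}_{\OXet}(V,\nabla)^{\OBdR}=\operatorname{rank}\bL$. We work in the setting of Theorem~\ref{thm:subRep}, assuming for the moment that $\bL$ is already very small, so that $(E,\theta)_\bL$, the Higgs bundle $(E,\theta)^{\OBdR}$, and the correspondence of Theorem~\ref{thm:subRep}(ii) are all available.

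By hypothesis $(E,\theta)^{\OBdR}\neq0$, and it is torsion free, being a sub-Higgs-sheaf of the bundle $(E,\theta)_\bL$; hence $\operatorname{rank}(E,\theta)^{\OBdR}>0$. By Theorem~\ref{thm:subRep}(ii) this Higgs bundle corresponds, under Faltings' (rank-preserving) functor, to the geometric subrepresentation $\bW^\text{geo}_{\mathcal O_{\bC_p}}\subset\bL^\text{geo}\otimes\mathcal O_{\bC_p}$, so $\bW^\text{geo}_{\mathcal O_{\bC_p}}$ is a nonzero $\mathcal O_{\bC_p}$-subrepresentation of rank $\operatorname{rank}(E,\theta)^{\OBdR}$. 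Inverting $p$, $\bW^\text{geo}_{\mathcal O_{\bC_p}}[\tfrac1p]$ is a nonzero $\bC_p$-subrepresentation of $\bL^\text{geo}\otimes\bC_p$. Since $\bL$ is geometrically absolutely irreducible, $\bL^\text{geo}\otimes\bC_p$ is irreducible --- absolute irreducibility is preserved under any extension of the coefficient field, by Jacobson density --- and therefore $\bW^\text{geo}_{\mathcal O_{\bC_p}}[\tfrac1p]=\bL^\text{geo}\otimes\bC_p$. Consequently $\operatorname{rank}(E,\theta)^{\OBdR}=\operatorname{rank}\bW^\text{geo}_{\mathcal O_{\bC_p}}=\operatorname{rank}\bL$.

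Now $(E,\theta)^{\OBdR}=(\Gr(V,\nabla)^{\OBdR},\Gr(\nabla^{\OBdR}))$, and taking the associated graded of the finite, exhaustive, separated filtration $\Fil$ leaves the generic rank unchanged, so $\operatorname{rank}_{\OXet}(V,\nabla)^{\OBdR}=\operatorname{rank}(E,\theta)^{\OBdR}=\operatorname{rank}\bL$. Thus $(V,\nabla)^{\OBdR}=v_*(\OBdR\otimes\bL)$ is of full rank, i.e.\ $\bL$ is de Rham. (As a byproduct: combining this with Theorem~\ref{thm:subRep}(i) and the triviality of the Chern classes of both sides, the inclusion $(E,\theta)^{\OBdR}\subset(E,\theta)_\bL$ is forced to be an equality --- the de Rham Higgs bundle recovers the full $\bC_p$-Higgs bundle attached to $\bL$.)

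The only substantive ingredient is Theorem~\ref{thm:subRep}; granting it, the corollary is formal and the care is entirely in the bookkeeping. Two points deserve attention. First, the reduction to the very small case interacts delicately with the hypothesis: the restriction of an absolutely irreducible local system along a finite \'etale cover need not stay irreducible, so one should either check that the subrepresentation of Theorem~\ref{thm:subRep}(ii) is already defined over $(X\setminus D)_K$ (so that absolute irreducibility can be invoked there), or simply read the corollary as a statement about the very small $\bL$ produced in Theorem~\ref{thm:subRep} and descend the conclusion, using that both de Rham-ness and the formation of $v_*(\OBdR\otimes-)$ commute with finite \'etale base change. Second, one must match the present criterion ``$v_*(\OBdR\otimes\bL)$ of full rank'' with the precise definition of a de Rham local system in the logarithmic setting of Diao--Lan--Liu--Zhu, and recall that Faltings' functor on small objects is an equivalence, in particular rank-preserving and faithful. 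I expect this bookkeeping --- especially the first point --- to be the only real obstacle.
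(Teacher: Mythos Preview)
The paper states this corollary without proof, as an immediate consequence of Theorem~\ref{thm:subRep}; your rank-count argument via part~(ii) is precisely the intended one. Regarding your base-change concern: note that Theorem~\ref{thm:subRep}(ii), as stated, already places $\bW^{\text{geo}}_{\mathcal O_{\bC_p}}$ inside $\bL^{\text{geo}}\otimes\mathcal O_{\bC_p}$ for the \emph{original} geometric fundamental group---the passage to a cover where $\bL$ becomes very small is internal to the proof (cf.\ the descent step at the end of the proof of Proposition~\ref{prop:main}), and the resulting subrepresentation descends---so geometric absolute irreducibility of $\bL$ can be invoked directly, and the bookkeeping you flag is absorbed into Theorem~\ref{thm:subRep} rather than the corollary.
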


In general we ask the following conjecture:
\begin{conjecture}[Descending]
	The geometric sub-local system in Theorem~\ref{thm:subRep} descends to a $\bZ_p$-\'etale subrepresentation
	\[ \bW\subset \bL. \]
	Consequently, one has that $\bW$ is the maximal de Rham subrepresentation of $\bL$.
\end{conjecture}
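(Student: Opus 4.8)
\emph{A possible strategy.} The plan is to identify the geometric sub-object produced by Theorem~\ref{thm:subRep} with the maximal de Rham sub-local system of $\bL$, whose existence costs nothing. Since $\bL$ has unipotent local monodromy, being de Rham is inherited by sub-local systems and by quotients (Liu--Zhu~\cite{LZ}, Diao--Lan--Liu--Zhu); the sum of two de Rham sub-local systems, being a quotient of their direct sum, is again de Rham, and as $\rank\bL<\infty$ the family of all de Rham sub-local systems of $\bL$ has a largest member, a de Rham sub-local system $\bW=\bL^{\mathrm{dR\text{-}max}}\subset\bL$ over $(X\setminus D)_K$. Both assertions of the conjecture would follow from the single claim that, under Faltings' functor, the geometric $\mO_{\bC_p}$-reduction of $\bW$ is precisely the sub-Higgs bundle $(E,\theta)^{\OBdR}$: then $\bW$ is de Rham by construction and it descends the sub-object in question.

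\emph{The easy inclusion.} One direction is formal. The de Rham local system $\bW$ has a filtered logarithmic de Rham bundle $D_{\mathrm{dR}}(\bW)=v_*(\OBdR\otimes\bW)$, and functoriality gives a filtered embedding $D_{\mathrm{dR}}(\bW)\hookrightarrow D_{\mathrm{dR}}(\bL)=(V,\nabla,\Fil)$, hence an inclusion $(E,\theta)(\bW)\subset(E,\theta)^{\OBdR}$ of associated graded Higgs bundles. Because $D_{\mathrm{dR}}(-)$ is Galois-equivariant, no choices intervene here; applying Theorem~\ref{main01} to $\bW$ (so that, $\bW$ being de Rham, the de Rham comparison identifies Faltings' functor of $(E,\theta)(\bW)$ with $\bW^{\mathrm{geo}}\otimes\mO_{\bC_p}$) and then Theorem~\ref{thm:subRep} yields $\bW^{\mathrm{geo}}\otimes\mO_{\bC_p}\subseteq\bW^{\mathrm{geo}}_{\mO_{\bC_p}}$.

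\emph{The hard inclusion.} The reverse inclusion is equivalent to $\rank(E,\theta)^{\OBdR}\le\rank\bL^{\mathrm{dR\text{-}max}}$, i.e.\ to the assertion that the de Rham invariants $D_{\mathrm{dR}}(\bL)$ are realised by an honest sub-local system of $\bL$. To attack it one would pass, via Faltings' functor applied to $(E,\theta)^{\OBdR}\subset(E,\theta)_{\bL}$, to the $\mO_{\bC_p}$-geometric sub-object $\bW^{\mathrm{geo}}_{\mO_{\bC_p}}\subset\bL^{\mathrm{geo}}\otimes\mO_{\bC_p}$; show it is stable under $\Gal(\bark/K)$ --- plausible, since $(E,\theta)^{\OBdR}$ is canonically attached to the Galois-equivariant datum $v_*(\OBdR\otimes\bL)$ --- so that it underlies an $\mO_{\bC_p}$-local system over $(X\setminus D)_K$; and finally produce a $\bZ_p$-lattice in it, using the mechanism indicated in the remark after Theorem~\ref{thm:subRep}, namely that the reduction of coefficients from $\hOXp$ to $\mO_{\bC_p}$ is pulled back from $\bL^{\mathrm{geo}}$ through Faltings' twisted pull-back of Higgs bundles. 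This would exhibit a genuine $\bZ_p$-sub-local system $\bW\subset\bL$ with $\bW^{\mathrm{geo}}\otimes\mO_{\bC_p}=\bW^{\mathrm{geo}}_{\mO_{\bC_p}}$, forcing $D_{\mathrm{dR}}(\bW)$ to have full rank, so that $\bW$ is de Rham.

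\emph{The main obstacle.} Two things must be overcome. First, Faltings' $p$-adic Simpson functor is purely geometric and is built from non-canonical data --- the lifting of $(X,D)_k$ over $\BdRp$ induced by $k\hookrightarrow\BdRp$, and the period $\xi$, neither of them fixed by $\Gal(\bark/K)$ --- so it does not commute with the Galois action in the naive sense; transporting the arithmetic descent datum across it requires showing that the particular sub-object $\bW^{\mathrm{geo}}_{\mO_{\bC_p}}$ is insensitive to these choices, the cleanest route being to replace Faltings' functor, where possible, by the Galois-equivariant relative de Rham comparison of Diao--Lan--Liu--Zhu and to match the two via the modified local construction used to prove Theorem~\ref{main01}. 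Second, and decisively, an $\mO_{\bC_p}$-geometric subrepresentation of $\bL^{\mathrm{geo}}\otimes\mO_{\bC_p}$ need not be $\bQ_p$-rational: one must rule out configurations such as a non-split extension $0\to\bL_1\to\bL\to\bL_2\to 0$ with $\bL_1$ geometrically absolutely irreducible and not de Rham (so $D_{\mathrm{dR}}(\bL_1)=0$ by the Corollary above), $\bL_2$ de Rham, $\bL_1\not\cong\bL_2$, and the connecting map $D_{\mathrm{dR}}(\bL_2)\to R^1v_*(\OBdR\otimes\bL_1)$ non-injective; there $D_{\mathrm{dR}}(\bL)\neq 0$, yet the only nonzero sub-local systems of $\bL$ are $\bL_1$ and $\bL$, neither de Rham, so no descent can exist. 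The conjecture therefore seems to need an additional hypothesis on $\bL$ --- semisimplicity, or geometric origin, or de Rham-ness of the relevant extensions --- under which $\bL^{\mathrm{dR\text{-}max}}$ does realise $D_{\mathrm{dR}}(\bL)$. Granting such a hypothesis, establishing the descent becomes essentially a special case of Faltings' conjecture recalled in the remark after Theorem~\ref{thm:subRep}, the extra leverage being that the semistable Higgs bundle $(E,\theta)^{\OBdR}$ with trivial Chern classes already sits inside $(E,\theta)_{\bL}$, which itself arises from the genuine local system $\bL$; converting that containment into $\bQ_p$-rationality of $\bW^{\mathrm{geo}}_{\mO_{\bC_p}}$ is the genuinely hard step.
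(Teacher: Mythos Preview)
The statement is a \emph{conjecture}; the paper does not prove it. What the paper offers is a decomposition into two sub-problems---(i) descend the $\mO_{\bC_p}$-geometric subrepresentation to an arithmetic $\mO_{\bC_p}$-representation, and (ii) reduce coefficients from $\mO_{\bC_p}$ to $\bZ_p$---together with a proposed attack on (i) in Section~\ref{sec:subrep} via the $p$-adic analogue of Simpson's $\bC^*$-action (the $\Gal(\bark/k)$-action on Higgs bundles described in Section~\ref{sec:C^*}). So there is no proof to match against; only a strategy.

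Your strategy is genuinely different from the paper's. The paper works bottom-up: start with the geometric object $\bW^{\mathrm{geo}}_{\mO_{\bC_p}}$ and try to push it down, using that the Higgs bundle $(E,\theta)^{\OBdR}$ is graded and defined over $k$, hence (conjecturally, via the $\bC^*$-action philosophy) $\Gal(\bark/k)$-invariant. You work top-down: posit the maximal de Rham sub-local system $\bL^{\mathrm{dR\text{-}max}}$ and try to show it realises all of $D_{\mathrm{dR}}(\bL)$. Your ``easy inclusion'' is clean and correct. The ``hard inclusion'' you correctly flag as the crux, and your reduction of it to $\rank D_{\mathrm{dR}}(\bL)\le\rank\bL^{\mathrm{dR\text{-}max}}$ is a sharper reformulation than anything the paper states.

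Your potential counterexample is the most valuable part of the proposal and goes beyond the paper. A non-split extension $0\to\bL_1\to\bL\to\bL_2\to 0$ with $\bL_1$ irreducible and not de Rham, $\bL_2$ de Rham, and the boundary map $D_{\mathrm{dR}}(\bL_2)\to H^1_{\mathrm{dR}}(\bL_1)$ non-injective, would indeed give $D_{\mathrm{dR}}(\bL)\neq 0$ while the only candidate sub-local systems are $\bL_1$ and $\bL$, neither de Rham. The paper does not address this scenario, and your conclusion---that the conjecture as stated likely requires an extra hypothesis such as semisimplicity or geometric origin---is a reasonable reading. Whether such an $\bL$ actually exists over a variety (as opposed to a point, where this is standard) is itself nontrivial, but the burden is on the conjecture, not on you.
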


\section{ An analogue of $\bC^*$-action} \label{sec:C^*}

In complex nonabelian Hodge theory, one defines a $\bC^*$-action on the moduli space of Higgs bundles $(E,\theta)$ in a natural way by taking multiplication on the Higgs field. By Simpson, a Higgs bundle is graded if and only if its isomorphic class is invariant under this $\bC^*$-action. The induced action on the moduli space of local systems via Simpson correspondence reminds very mysterious. In this section, the Proposition~\ref{why analogue} implies that the $p$-adic analogue of Simpson's $\bC^*$-action should be defined to be the action of Galois on the category of Higgs bundles.\\[0cm]

Let us firstly introduce a natural action of Galois group on generalized representations and Higgs bundles. 
\begin{defi}\label{def:GalGRep}
	Let $\delta\in \Gal(\bark/k)$ be an element in the absolute Galois group of $k$. Fix a lift $\widehat{\delta}\in \pi_1(X)$ which lifts $\delta$. 
	\begin{itemize}
		\item[i).] Let $(\bV,\rho)$ be a generalized representations. We define a new generalized representation $(\bV,\rho)^{\delta}$ by
		\[\bV^{\delta}:= \bV\otimes_{\widehat{\delta}}\OXp\]
		and 
		\[\rho^{\delta}(\sigma):=\rho(\widehat{\delta}^{-1}\circ \sigma \circ \widehat{\delta}) \otimes_{\widehat{\delta}}\mathrm{id}.\]
		\item[ii).] Let $(E,\theta)$ be a Higgs bundle. we define a new Higgs bundle $(E,\theta)^{\delta}$ by
		\[E^{\delta}:= E\otimes_{\delta} \mO_{\XbarV}\]
		and
		\[\theta^{\delta}(e\otimes_\delta 1)= \theta(e)\otimes_{\delta}1\] 
	\end{itemize}
\end{defi}

\begin{rmk}
	The isomorphism class of $(\bV^\delta, \rho^{\delta})$ does not depend on the choice of $\widehat{\delta}$. Definition does given an action of the absolute Galois group on the set of isomorphic classes of generalized representations and that of Higgs bundles. The proof is routine, we leave it for the readers.
\end{rmk}

In the following, we show that these two actions are compatible under the modified Faltings' functor in Theorem~\ref{thm:modifiedFunctor}.
\begin{thm}\label{prop:actionsCompatible}	The modified Faltings' functor commutes with the Galois actions. More precisely, for a very small Higgs bundle $(E,\theta)$ 
	\[\Big( \bD^{Fal}(E,\theta) \Big)^\delta \cong \bD^{Fal}\Big( (E,\theta)^\delta \Big).\]
\end{thm}
\begin{proof} We only need to check this locally over a small affine subset $\frakU=\Spec(\frakR)$. Recall from \eqref{underlyingMod} that $\bV = E\otimes \hbR$. Due to the following commutative diagram
\begin{equation}
\xymatrix{
\RbarV \ar[r] \ar[d]^{\delta} & \hbR \ar[d]^{\widehat{\delta}}\\
\RbarV \ar[r] & \hbR\\
}
\end{equation}
there exists a canonical isomorphism
\[E^\delta\otimes \hbR \rightarrow \bV \otimes_{\widehat{\delta}} \hbR\]
sending $(e\otimes_{\delta}1)\otimes1$ to $(e\otimes1)\otimes_{\widehat{\delta}} 1$. In the following, we identify these two module via this isomorphism. Choose local parameters $\TT$ of $\frakR$ over $\Ok$ and fix a local lift $\tau$. Denote by $(\bV',\rho')$ the generalized representation attached to $(E^{\delta},\theta^\delta)$ with respect to the lift $\widehat{\delta}\circ\tau$. Then
\begin{equation*}
\begin{split}
\rho'(\sigma)((e\otimes_{\delta}1)\otimes 1) :=
& \sum_{I} \frac{\big(\theta^{\delta}(\xi\partial)\big)^I(e\otimes_\delta 1)}{I!}\otimes \left(\frac{\tau(T)^{\widehat{\delta}}-(\tau(T)^{\widehat{\delta}})^{\sigma}}{\xi}\right)^I\\
&= \sum_{I} \frac{\big(\theta(\xi\partial)\big)^I(e)}{I!}\otimes_\delta \left(\frac{\xi}{\xi^\delta}\right)^I \otimes \left(\frac{\tau(T)^{\widehat{\delta}}-(\tau(T)^{\widehat{\delta}})^{\sigma}}{\xi}\right)^I\\
&= \sum_{I} \frac{\big(\theta(\xi\partial)\big)^I(e)}{I!} \otimes \left(\frac{\tau(T)-\tau(T)^{\widehat{\delta}^{-1}\circ\sigma \circ \widehat{\delta}}}{\xi}\right)^I \otimes_{\widehat{\delta}} 1\\
&= \rho(\widehat{\delta}^{-1}\circ\sigma \circ \widehat{\delta})(e\otimes1) \otimes_{\widehat{\delta}} 1\\
& = \rho^\delta(\sigma)((e\otimes1)\otimes_{\widehat{\delta}} 1)\\
\end{split}
\end{equation*}
Thus $\rho'$ coincides with $\rho^{\delta}$ and the theorem follows.
\end{proof}

\begin{remark}
According the theory of Higgs-de Rham flow built in~\cite{LSZ1}, there is an equivalent functor
\begin{center}
	$\bD_{crys} \colon $ \{periodic Higgs bundles\} $\rightarrow$ \{crystalline representations\}.
\end{center}
Example in~\cite[section $5$]{Fal05} claims that his functor from the category $\MF$ of Fontaine-Faltings modules to the category of crystalline representations is compatible with his $p$-adic Simpson correspondence. This example implies that the modified functor $\bD^{Fal}$ coincides with $\bD_{crys}$ after twisting the Higgs field by $t^{-1}$ (we note that it is the same as twisting by $\xi^{-1}$, because the Higgs bundle is graded). That is for any period Higgs bundle
\[\bD_{crys}(E,\theta) \cong \bD^{Fal}(E,\theta t^{-1}).\]
Since periodic Higgs bundle is always defined to be graded. the functor $\bD_{crys}$ also communicates with the Galois action by Theorem~\ref{prop:actionsCompatible}.	This generalizes~\cite[Corollary 4.3]{ST}, where Sheng and Tong showed the commutativity for a periodic Higgs bundles $(E,0)$ with zero Higgs field.
\end{remark}

\begin{example}\label{exm:basechangehiggs}
Let $(E_0,\theta_0)$ be a very small usual Higgs bundle over $\frakX$, that is the Higgs field $\theta_0$ is contained in $\End(E_0)\otimes\Omega_\frakX^1$ with $\theta_0$ divided by a sufficiently large power of $p$. Let $(\bV,\rho)$ be the generalized representation corresponding to the very small Higgs bundle $(E,\theta)$, where $E=E_0\otimes \mO_{\XbarV}$, $\theta=\theta_0/t$, and $t=\log([(\zeta_{p^n})_n])\in \BdRp/(\xi^2)$. Then for any unit $c\in \bZ_p^\times$ sufficient closed to $1$, then there exists $\delta\in \Gal(\bark/k)$ such that 
\[\delta(t) = c^{-1}t.\]
Hence
\[(E,\theta)^{\delta} \cong (E,c\theta).\]
By Theorem~\ref{prop:actionsCompatible}, the Higgs bundle $(E,c\theta)$ is corresponding to the generalized representation $(\bV,\rho)^{\delta}$.
\end{example}

\begin{prop}\label{why analogue}
Let $(E_0,\theta_0)$, $(E,\theta)$ and $(\bV,\rho)$ be given as in Example~\ref{exm:basechangehiggs}. Then $(E,\theta)$ is graded if and only if the isomorphic class of $(\bV,\rho)$ is invariant under the action of $\Gal(\bark/k)$.
\end{prop}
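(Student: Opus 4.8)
\medskip

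The plan is to transport the statement to the Higgs side, using the compatibility of the two actions (Proposition~\ref{prop:actionsCompatible}), the explicit identification of Example~\ref{exm:basechangehiggs}, and the full faithfulness of the modified functor (Theorem~\ref{thm:modifiedFunctor}), and then to run the $p$-adic incarnation of Simpson's characterisation of $\bC^{*}$-fixed points. First I would record the dictionary. By the computation underlying Example~\ref{exm:basechangehiggs}, for every $\delta\in\Gal(\bark/k)$ one has $(E,\theta)^{\delta}\cong(E,\chi(\delta)^{-1}\theta)$, where $\chi$ is the cyclotomic character, and by Proposition~\ref{prop:actionsCompatible} the generalized representation attached to $(E,\chi(\delta)^{-1}\theta)$ is $(\bV,\rho)^{\delta}$. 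Since the functor $(E,\theta)\mapsto(\bV,\rho)$ is fully faithful it reflects isomorphisms, so an isomorphism $(\bV,\rho)^{\delta}\cong(\bV,\rho)$ comes from an isomorphism $(E,\chi(\delta)^{-1}\theta)\cong(E,\theta)$ of Higgs bundles. Therefore $(\bV,\rho)$ is $\Gal(\bark/k)$-invariant if and only if $(E,c\theta)\cong(E,\theta)$ for every $c$ in the image of $\chi$ --- an open subgroup of $\bZ_{p}^{\times}$, which in particular contains elements $c_{0}$ of infinite multiplicative order lying arbitrarily close to $1$.

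For the implication ``graded $\Rightarrow$ invariant'', suppose $E_{0}=\bigoplus_{i}E_{0}^{i}$ with $\theta_{0}(E_{0}^{i})\subset E_{0}^{i-1}\otimes\Omega^{1}_{\frakX}$; the induced grading on $E=E_{0}\otimes\mO_{\XbarV}$ witnesses that $(E,\theta)$ is graded. For $c\in\bZ_{p}^{\times}$ the $\mO_{\XbarV}$-linear automorphism $m_{c}$ of $E$ acting by $c^{-i}$ on $E^{i}$ satisfies $m_{c}\,\theta=(c\theta)\,m_{c}$, hence is an isomorphism $(E,\theta)\xrightarrow{\sim}(E,c\theta)$. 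By the dictionary above, $(\bV,\rho)^{\delta}\cong(\bV,\rho)$ for all $\delta$, i.e. $(\bV,\rho)$ is Galois-invariant.

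For the converse --- the substantive direction --- I would fix, using the Galois invariance as above, an isomorphism $\phi\colon(E,\theta)\xrightarrow{\sim}(E,c_{0}\theta)$ for one $c_{0}\in\bZ_{p}^{\times}$ of infinite multiplicative order. Regarding $\phi$ as an $\mO_{\XbarV}$-linear automorphism of the bundle $E$, the Higgs-morphism condition reads $\phi\,\theta\,\phi^{-1}=c_{0}\,\theta$ in $\End(E)\otimes\Omega^{1}$. Since $\frakX$ is proper over $\Ok$, the coefficients of the characteristic polynomial of $\phi$ are global sections of $\mO_{\XbarV}$, hence constant on each connected component; so $\phi$ has finitely many nonzero constant eigenvalues $\lambda_{1},\dots,\lambda_{s}$, and (after inverting $p$, the primary projectors being polynomials in $\phi$) $E$ decomposes into the generalized-eigen subbundles $E=\bigoplus_{j}E_{\lambda_{j}}$. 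From $\phi\theta\phi^{-1}=c_{0}\theta$ one computes inductively $(\phi-c_{0}\lambda)^{N}\theta(v)=c_{0}^{N}\theta\big((\phi-\lambda)^{N}v\big)$, whence $\theta(E_{\lambda})\subset E_{c_{0}\lambda}\otimes\Omega^{1}$ for every $\lambda$. As there are only finitely many eigenvalues and $c_{0}$ has infinite order, they fall into finite ``strings'' contained in single $c_{0}^{\bZ}$-cosets; assigning $E_{c_{0}^{i}\mu}$ the degree $-i$ for a chosen base point $\mu$ of each coset then defines a $\bZ$-grading of $E$ with respect to which $\theta$ has degree $-1$, that is, $(E,\theta)$ is graded.

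The main obstacle is this converse, and within it two points. First, producing an honest Higgs-bundle automorphism $\phi$ with $\phi\theta\phi^{-1}=c_{0}\theta$ out of the mere invariance of the isomorphism class of $(\bV,\rho)$: this is exactly where full faithfulness of the modified functor and the compatibility of Galois actions from Section~\ref{sec:C^*} are indispensable. Second, passing from the rational grading produced above to one defined at the integral level of $E=E_{0}\otimes\mO_{\XbarV}$: here the eigenvalues of $\phi$ lie in a single $c_{0}^{\bZ}$-coset and may be congruent modulo $p$, so the naive eigenbundle splitting need not be integral, and one has to exploit the invariance under the whole open subgroup $\mathrm{im}(\chi)$ (rather than a single $c_{0}$) --- morally, to differentiate the resulting one-parameter family of isomorphisms into a grading operator with integer weights --- or a bound on the length of the grading. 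The remaining ``string'' bookkeeping and the verification that $\theta$ has degree $-1$ are then routine.
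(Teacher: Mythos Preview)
Your proof is correct and follows essentially the same route as the paper: transport the question to the Higgs side via Proposition~\ref{prop:actionsCompatible} and full faithfulness, then for the nontrivial direction pick a single $c_{0}\neq 1$ close to $1$ and run Simpson's fixed-point argument to produce the grading. The paper's proof is in fact a four-line sketch that simply invokes ``Simpson's method'' at this last step, so your generalized-eigenspace computation is exactly what is being black-boxed there; your residual worry about integrality of the grading is likewise not addressed in the paper and should be regarded as part of the same black box rather than a defect of your argument.
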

\begin{proof} Suppose that the isomorphic class of $(\bV,\rho)$ is invariant under the $\Gal(\bark/k)$-action. Then there exists an isomorphism 
	\[(E,c\theta)\cong (E,\theta)\]
for some $c\in\bZ_p$ sufficient closed to $1$. By Simpson's method, the Higgs bundle $(E,\theta)$ is graded. Conversely, Since $\theta$ is graded 
\[(E,c\theta)\cong (E,\theta)\]
for any $c\in \bZ_p^\times$. In particular 
\[(E^\delta,\theta^\delta)\cong (E,\theta).\]
Thus by Proposition~\ref{prop:actionsCompatible}, isomorphic class of $(\bV,\rho)$ is invariant under the action of $\Gal(\bark/k)$.
\end{proof}

\begin{prop}\label{nil}
Let $(E,\theta)$ be a Higgs bundle over $\XhbV$ such that the isomorphism class of its associated generalized representation is fixed by the action of $\Gal(\bark/k)$. Then the Higgs field $\theta$ is nilpotent.
\end{prop}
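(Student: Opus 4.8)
The plan is to transport the hypothesis to the Higgs side via the dictionary already established, express the Galois action through its effect on the Hitchin characteristic coefficients, and then read off nilpotency from Tate's vanishing of Galois invariants in $\bC_{p}$. First I would reformulate: since the phrase ``associated generalized representation'' presupposes that the functor of Theorem~\ref{thm:modifiedFunctor} applies, I may assume $(E,\theta)$ very small, and then every Galois twist $(E,\theta)^{\delta}$ is very small too, because (see the next paragraph) it differs from $(E,\theta)$ by multiplying the Higgs field by a $p$-adic unit. By Proposition~\ref{prop:actionsCompatible} the generalized representation attached to $(E,\theta)^{\delta}$ is $(\bV,\rho)^{\delta}$, so the hypothesis that the isomorphism class of $(\bV,\rho)$ is $\Gal(\bark/k)$-fixed, combined with full faithfulness of the functor, yields
\[(E,\theta)^{\delta}\;\cong\;(E,\theta)\qquad\text{for every }\delta\in\Gal(\bark/k).\]

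Next I would make this twist explicit in Hitchin-theoretic terms. Writing $t=\log([(\zeta_{p^{n}})_{n}])\in\BdRp$, which generates the same ideal as $\xi$ (so twisting by $t^{-1}$ or by $\xi^{-1}$ gives canonically equivalent categories), set $\psi:=t\theta\colon E\to E\otimes\Omega^{1}_{\frakX/\Ok}$, an ordinary Higgs field, and let $b_{i}=b_{i}(\psi)\in H^{0}(\XhbV,\operatorname{Sym}^{i}\Omega^{1}_{\frakX/\Ok})$ for $i=1,\dots,\operatorname{rank}E$ be its characteristic coefficients, so that $\theta$ is nilpotent precisely when $b_{i}=0$ for all $i\ge 1$. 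As in Example~\ref{exm:basechangehiggs} and the discussion preceding it, the Galois action on the Higgs side is ordinary (coefficientwise, $\bC_{p}$-semilinear) Galois conjugation composed with the action of $\delta$ on $t^{-1}$, namely multiplication by $\chi(\delta)^{-1}$ with $\chi$ the cyclotomic character. Since $\operatorname{Sym}^{i}\Omega^{1}_{\frakX/\Ok}$ is defined over $\Ok$, the group $\Gal(\bark/k)$ acts on $H^{0}(\XhbV,\operatorname{Sym}^{i}\Omega^{1}_{\frakX/\Ok})$ through the base change $\Ohbk/\Ok$; denoting this action by $b_{i}\mapsto\delta(b_{i})$, I would check, by unwinding Faltings' local construction exactly as in the proof of Proposition~\ref{prop:actionsCompatible}, that the characteristic coefficients of $(E,\theta)^{\delta}$ in the $t$-normalisation are $\chi(\delta)^{-i}\,\delta(b_{i})$.

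Finally, since characteristic coefficients are invariants of the isomorphism class of a Higgs bundle, the displayed isomorphism would force $\delta(b_{i})=\chi(\delta)^{i}b_{i}$ for all $\delta$ and $i$. By flat base change $H^{0}(\XhbV,\operatorname{Sym}^{i}\Omega^{1}_{\frakX/\Ok})\otimes_{\bZ_{p}}\bQ_{p}\cong H^{0}(\frakX_{k},\operatorname{Sym}^{i}\Omega^{1})\otimes_{k}\bC_{p}$, which as a $\Gal(\bark/k)$-module is a finite direct sum of copies of $\bC_{p}$; hence an element transforming by $\chi^{i}$ produces a $\Gal(\bark/k)$-fixed element in a finite sum of copies of $\bC_{p}(-i)$, which vanishes for $i\ne 0$ by Tate's theorem. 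Therefore $b_{i}=0$ for every $i\ge 1$ and $\theta$ is nilpotent. I expect the main obstacle to be the computation in the previous paragraph: verifying that for a general — not necessarily graded — Higgs bundle the ``extra action on $\xi^{-1}$'' is exactly multiplication by $\chi(\delta)^{-1}$ on the $t$-normalised characteristic coefficients, which requires carefully tracking how $\delta$ acts on $\xi$ through the liftings $\tau$ and the Taylor maps $\beta_{\tau,\tau'}$; the base-change identification and the appeal to Tate's theorem are routine by comparison.
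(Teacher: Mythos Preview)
Your proposal is correct and follows essentially the same route as the paper: pass to the characteristic coefficients of the Higgs field, recognise them as Galois-fixed elements of $\bC_p$-spaces carrying a nonzero Tate twist, and conclude by Tate's vanishing. Your worry about tracking $\delta$ through the Taylor maps $\beta_{\tau,\tau'}$ is unnecessary, since the Galois action on the Higgs side is \emph{defined} directly (the paper sets $\theta^{\delta}(e\otimes_{\delta}1)=\theta(e)\otimes_{\delta}1$, and Proposition~\ref{prop:actionsCompatible} has already matched this with the action on generalized representations), so the effect on characteristic coefficients is immediate; the paper simply records that the $i$-th coefficient of $\det(T\cdot\mathrm{id}-\theta)$ lies in $\Gamma(\frakX,\Omega^{\otimes i}_{\frakX/\Ok})\otimes\widehat{\overline{k}}(-i)$ (after first descending $E$ to a bundle over $\frakX$, a step your version avoids) and invokes the vanishing of Galois invariants there.
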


\begin{proof} Since the underlying vector bundle $E$ is invariant under the Galois group action, it is defined over $\frakX$, that is there exists a vector bundle $E_0$ over $\mX$ such that 
	\[E = E_0\otimes \mO_\XbarV.\]
Under this identity, the Higgs field $\theta$ is an element in
\[ \End_{\mO_\frakX}(E_0)\otimes \Omega_{\frakX}^1\otimes \mO_{\hbk}\cdot \xi^{-1}.\]
Since $(E,\theta)$ is invariant under the Galois action, the coefficients of $\det(T\cdot \mathrm{id}-\theta)$ are all invariant under the Galois action. But the coefficients are contained in 
\[\Gamma(\frakX,\Omega^{\otimes i}_{\frakX/\Ok}) \otimes \widehat{\overline{k}}(-i), \text{ where } i=0,\cdots,r.\] 
Since, for any $i=1,\cdots,r$,
\[\left(\Gamma(\frakX,\Omega^{\otimes i}_{\frakX/\Ok}) \otimes \widehat{\overline{k}}(-i)\right)^{\Gal(\bark/k)} = 0,\]
one has 
\[\det(T\cdot \mathrm{id}-\theta) = T^r.\]
Thus $\theta$ is nilpotent.
\end{proof}

\begin{prop}\label{prop:rank2GalInvimpliesgrad} Let $(E,\theta)$ be a rank $2$ Higgs bundle. If its isomorphism class is invariant under the action of $\Gal(\bark/k)$, then $(E,\theta)$ is graded.
\end{prop}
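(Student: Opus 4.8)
The plan is to reduce to a non-zero nilpotent Higgs field and then force the short exact sequence that exhibits $E$ as an extension to split, the splitting being dictated by the $p$-adic $\bC^{*}$-action of this section together with the vanishing $\bC_p(n)^{\Gal(\bark/k)}=0$ for $n\neq 0$. First I would apply the preceding proposition: as the generalized representation attached to $(E,\theta)$ is $\Gal(\bark/k)$-invariant (equivalently, by Proposition~\ref{prop:actionsCompatible}, $(E,\theta)^{\delta}\cong(E,\theta)$ for all $\delta$), the Higgs field $\theta$ is nilpotent, so $\theta^{2}=0$ because $\rank E=2$. If $\theta=0$ then $(E,\theta)$ is trivially graded; so assume $\theta\neq 0$. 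From the proof of that proposition one also gets that $E$ descends, $E=E_{0}\otimes_{\mO_{\frakX}}\mO_{\XbarV}$. Set $L:=\ker\theta$, a saturated sub-line bundle; using $\theta^{2}=0$ one sees that $\theta$ factors as $E\twoheadrightarrow Q:=E/L\xrightarrow{\bar\theta}L\otimes\Omega^{1}_{\frakX/\Ok}\cdot\xi^{-1}\hookrightarrow E\otimes\Omega^1_{\frakX/\Ok}\cdot\xi^{-1}$ with $\bar\theta$ a non-zero injection. The assertion ``$(E,\theta)$ is graded'' is then exactly that the extension class $\epsilon\in\operatorname{Ext}^{1}(Q,L)$ of $0\to L\to E\to Q\to 0$ vanishes: a splitting $E=L\oplus Q$ is automatically a grading since $\theta|_{L}=0$ and $\theta(Q)\subseteq L\otimes\Omega^{1}_{\frakX/\Ok}\cdot\xi^{-1}$. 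So the goal is $\epsilon=0$.

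Suppose $\epsilon\neq 0$. For each $\delta$, an isomorphism realizing $(E,\theta)^{\delta}\cong(E,\theta)$ carries $\ker\theta^{\delta}$ onto $\ker\theta=L$; since $\ker\theta^{\delta}$ is the $\delta$-twist of $L$, the class $[L]$ is $\Gal(\bark/k)$-invariant, hence $L$ descends to $\frakX$ (exactly as $E$ does in the preceding proposition, after at worst a finite extension of $k$) and $\ker\theta^{\delta}=L$. Composing with the canonical identification $E^{\delta}\cong E$, we obtain $g_{\delta}\in\Aut_{\mO_{\XbarV}}(E)$ with $g_{\delta}(L)=L$ and $g_{\delta}\,\theta^{\delta}\,g_{\delta}^{-1}=\theta$. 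Write $\lambda_{\delta},\mu_{\delta}$ for the scalars by which $g_{\delta}$ acts on $L$ and on $Q$; the obstruction to lifting $(\lambda_{\delta},\mu_{\delta})\in\End(L)\oplus\End(Q)$ to $\End(E)$ is $(\lambda_{\delta}-\mu_{\delta})\,\epsilon$, which vanishes since $g_{\delta}$ exists, and as $\operatorname{Ext}^{1}(Q,L)$ is a $\widehat{\overline k}$-vector space (after inverting $p$) and $\epsilon\neq 0$ we get $\lambda_{\delta}=\mu_{\delta}$. Then $n_{\delta}:=g_{\delta}-\lambda_{\delta}\cdot\operatorname{id}_{E}$ factors through $E\twoheadrightarrow Q\to L\hookrightarrow E$, so $n_{\delta}\theta^{\delta}=\theta^{\delta}n_{\delta}=0$ and therefore $g_{\delta}\theta^{\delta}g_{\delta}^{-1}=\theta^{\delta}$; combined with the previous equation this gives $\theta^{\delta}=\theta$ for every $\delta$. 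Hence $\theta$ is a $\Gal(\bark/k)$-invariant element of $\End_{\mO_{\frakX}}(E_{0})\otimes_{\mO_{\frakX}}\Omega^{1}_{\frakX/\Ok}\otimes_{k}\bC_{p}(-1)$, the factor $\xi^{-1}$ being a single negative Tate twist (this is the content illustrated by Example~\ref{exm:basechangehiggs}); but $\bigl(M\otimes_{k}\bC_{p}(-1)\bigr)^{\Gal(\bark/k)}=M\otimes_{k}H^{0}(G_{k},\bC_{p}(-1))=0$ by Tate for any $k$-vector space $M$, so $\theta=0$, a contradiction. Therefore $\epsilon=0$ and $(E,\theta)$ is graded.

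The step I expect to be delicate is the identification, used at the end of the argument, of the $\Gal(\bark/k)$-action on the target $\Omega^{1}_{\frakX/\Ok}\cdot\xi^{-1}$ of the Higgs field as precisely one negative Tate twist, i.e.\ that $\delta$ scales $\xi^{-1}$ by $\chi_{\mathrm{cyc}}(\delta)^{-1}$ modulo the higher filtration --- this is the $p$-adic analogue of Simpson's $\bC^{*}$-action and is exactly what prevents the Galois action from being absorbed into an automorphism of $(E,\theta)$; Example~\ref{exm:basechangehiggs} is the prototype of this computation. Two further points need a line of care: the torsion-freeness of $\operatorname{Ext}^{1}(Q,L)$, for which one should pass to the generic fibre (invert $p$), and the descent of the line bundle $L$ from the Galois-invariance of its class, which is handled exactly as the descent of $E$ in the preceding proposition.
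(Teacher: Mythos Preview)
Your route is genuinely different from the paper's. The paper does not argue by contradiction on the extension class; instead, after showing that $L$ and $L'=E/L$ descend as abstract line bundles, it writes $\theta=\omega\otimes c\cdot t^{-1}$ with $\omega$ defined over $k$, uses $(\bC_p\cdot t^{-1})^{\Gal(\bark/k)}=0$ to find a single $\delta$ with $\mu:=\delta(ct^{-1})/(ct^{-1})\neq 1$, obtains an isomorphism $(E,\mu\theta)\cong(E,\theta)$, and then invokes ``Simpson's argument'' (eigenspace decomposition of the intertwiner) to conclude gradedness. Your proof is essentially an explicit rank-$2$ unpacking of that last black box, organised as a contradiction on $\epsilon\neq 0$: you force the intertwiner $g_\delta$ to have equal diagonal scalars, hence to commute with $\theta^\delta$, hence $\theta^\delta=\theta$ for every $\delta$, and then Tate kills $\theta$. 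This is cleaner in that it avoids the Lefschetz reduction and the appeal to Simpson, and it makes transparent exactly where the Tate twist enters.

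There is, however, one genuine gap you should repair. Your deduction $(\lambda_\delta-\mu_\delta)\epsilon=0$ uses that $g_\delta$ is a \emph{filtered} automorphism of $E$, i.e.\ $g_\delta(L)=L$. But after the canonical identification $E^\delta\cong E$ coming from the descent $E=E_0\otimes\mO_{\XbarV}$, the Higgs field $\theta^\delta$ is the $\delta$-conjugate of $\theta$ and its kernel is $\delta(L)$, the sub-line-bundle of $E$ obtained by applying $\delta$ to the defining equations of $L\subset E_0\otimes\bC_p$. The Higgs isomorphism gives $g_\delta(\delta(L))=L$, not $g_\delta(L)=L$. Knowing that $L$ descends as an \emph{abstract} line bundle (which is all your Picard argument yields) does not give $\delta(L)=L$ inside $E$; for that you would need the inclusion $L\hookrightarrow E$ itself to be defined over $k$, which you have not shown. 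The paper sidesteps this by never identifying $E^\delta$ with $E$: it works only with the induced isomorphisms on the abstract line bundles $L,L'$ and the map $\bar\theta:L'\to L\otimes\Omega^1\cdot t^{-1}$.

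The gap is not fatal. One fix, in your spirit: drop the identification $E^\delta\cong E$ and record instead the two compatibilities that $\psi_\delta$ gives after identifying $L^\delta\cong L$, $Q^\delta\cong Q$ via their descents, namely $\lambda_\delta\,\delta(\epsilon)=\mu_\delta\,\epsilon$ in $\Ext^1(Q_0,L_0)\otimes_k\bC_p$ and $\lambda_\delta\,\delta(\bar\theta)=\mu_\delta\,\bar\theta$ in $\Hom(Q_0,L_0\otimes\Omega^1)\otimes_k\bC_p\cdot t^{-1}$. Writing $\bar\theta=\sum\omega_i\otimes c_i t^{-1}$ and $\epsilon=\sum e_\alpha\otimes\epsilon_\alpha$ in $k$-bases, these say $\delta(c_i t^{-1})=(\mu_\delta/\lambda_\delta)\,c_i t^{-1}$ and $\delta(\epsilon_\alpha)=(\mu_\delta/\lambda_\delta)\,\epsilon_\alpha$; hence for any $j$ with $c_j\neq 0$ one has $\delta(\epsilon_\alpha/c_j)=\chi(\delta)^{-1}\epsilon_\alpha/c_j$, so $\epsilon_\alpha/c_j\in\bC_p(1)^{\Gal(\bark/k)}=0$ and $\epsilon=0$. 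This reaches your conclusion without ever asserting $g_\delta(L)=L$.
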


\begin{proof}
By Lefschetz hyperplane section theorem, we may assume that the relative dimension of $\frakX/\Ok$ is $1$. Denote \[L=\ker\theta.\]
If $\rank(L)=2$, there is nothing to prove. Now we assume $\rank(L)=1$ and denote $L'=E/L$. Then the Higgs field is equivalent to
\[\theta\colon L' \rightarrow L \otimes \Omega^1_{\frakX/\Ok}.\]
Since 
\[(E^\delta,\theta^\delta) \cong (E,\theta),\]
there exists a commutative diagram as follows
\[\xymatrix{
L' \ar[r]^-{\theta} \ar[d]^{\cong}_{f_{\delta}} & L \otimes \Omega^1_{\frakX/\Ok}\cdot t^{-1} \ar[d]^{\cong}_{g_\delta}\\
L'^{\delta} \ar[r]^-{\theta^\delta} & L^\delta \otimes \Omega^1_{\frakX/\Ok} \cdot t^{-1} \\}\]
Hence the line bundles $L$ and $L'$ are defined over $\frakX$. That is, there exist line bundles $L_0$ and $L_0'$ over $\frakX$ such that 
\[L' = L'_0\otimes \mO_{\frakX_{\mO_{\hbk}}}\]
and 
\[L = L_0\otimes \mO_{\frakX_{\mO_{\hbk}}}.\]
The Higgs field can be rewrite as
\[\theta = \sum_{i=1}^m\omega_i \otimes c_i \cdot t^{-1} \in \Hom(L_0',L_0\otimes\Omega^1_{\frakX/\Ok}) \otimes \mO_{\hbk}\cdot t^{-1}\]
where $\omega_1,\cdots,\omega_m$ is a basis of $\Hom(L_0',L_0\otimes\Omega^1_{\frakX/\Ok})$ and $c_i\in \mO_{\hbk}\setminus{0}$. The existence of an isomorphism 
\[(E^\delta,\theta^\delta) \cong (E,\theta)\]
implies there exists a nonzero $\lambda=\lambda_\delta\in \mO_{\hbk}^\times$ such that
\[\delta(c_i) = \lambda c_i,\quad \text{for all } i=1,2,\cdots,m.\]
Denote $c:=c_{i_0}\neq 0$ for some $1\leq i_0\leq m$. Then $\frac{c_i}{c}$ is invariant under the action of $\Gal(\bark/k)$. Hence $\frac{c_i}{c}$ is contained in $k$. Denote 
\[\omega= \sum_{i=1}^m\omega_i \cdot \frac{c_i}{c}.\]
Then one has
\[\theta = \omega \otimes c \cdot t^{-1}.\]
So one has $\theta^\delta= \frac{\delta(ct^{-1})}{ct^{-1}}\cdot\theta$ and
\[(E^\delta,\theta^\delta) = (E,\frac{\delta(ct^{-1})}{ct^{-1}}\cdot\theta).\]
Since 
\[\left(\hbk\cdot t^{-1}\right)^{\Gal(\bark/k)}=0,\]
there exists $\delta\in \Gal(\bark/k)$ such that $\mu:=\frac{\delta(ct^{-1})}{ct^{-1}}\neq 1$. Then
\[(E,\mu\theta)\cong (E,\theta)\]
By the argument due to Simpson, one has $(E,\theta)$ is graded.
\end{proof}

\begin{theorem}\label{thm:rk2}
 The following statements hold.
\begin{itemize}
	\item[i).] A generalized representation corresponding to a graded Higgs bundle over $k$ is $\Gal(\bark/k)$-invariant and conversely, a Higgs bundle corresponding to a Galois-invariant generalized representation is nilpotent.
	\item[ii).] A rank-$2$ generalized representation is $\Gal(\bark/k)$-invariant if and only if the corresponding Higgs bundle is graded and defined over $k$ up to an isomorphism. 
\end{itemize}
\end{theorem}
\begin{proof}
	The first term is a sum-up of Proposition~\ref{why analogue} and Proposition~\ref{nil}. We only need to prove the second term at here. By Proposition~\ref{prop:rank2GalInvimpliesgrad} the Higgs bundle associated to a Galois invariant generalized representation is graded. According Theorem~\ref{prop:actionsCompatible}, the isomorphic classes of the Higgs bundle is also Galois invariant. 
	Conversely, if a Higgs bundle is graded and defined over $k$ then it isomorphic of a Higgs bundle of form as in Example~\ref{exm:basechangehiggs}. Proposition~\ref{why analogue} implies that it is invariant under the action of the Galois group.
\end{proof}

\begin{conjecture} \label{conj:galHiggs}
	A generalized representation is $\Gal(\bark/k)$-invariant if and only if the corresponding Higgs bundle is graded and defined over $k$ up to an isomorphism.
\end{conjecture}

\end{document}